\numberwithin{equation}{section}
\newtheorem{thm}{Theorem}
\newtheorem{lem}{Lemma}
\theoremstyle{definition}
\newcommand{\fip}{\varphi}
\newcommand{\ee}{\varepsilon}
\numberwithin{equation}{section} \numberwithin{lem}{section}
\numberwithin{thm} {section} \numberwithin{rem} {section}
\numberwithin{prop} {section} \numberwithin{cor} {section}
\newcommand{\Om}{\Omega}
\newcommand{\om}{\omega}
\newcommand{\R}{{\mathbb R}}
\newcommand{\Rd}{\mathbb{R}^2}
\newcommand{\de}{\partial}
\newcommand{\calP}{\mathcal P}
\newcommand{\dive}{\mathrm{div}\;}
\newcommand{\Pda}{\mathcal P(d\alpha)}
\newcommand{\calE}{\mathcal E}
\newcommand{\vn}{v_n}
\newbox\tr@tto
\def\medint{\displaystyle\copy\tr@tto\kern-10.4pt\int}
\newcommand{\loc}{_{\fam 0 loc}}
\newcommand{\dmedint}{{}\hbox
{\vrule height 2,95pt depth -2,2pt width 6pt}\kern-0.94em }
\newcommand{\tmedint}{{}\hbox
{\vrule height 2,7pt depth -2,3pt width 5pt}\kern-8,5pt }
\newcommand{\smedint}{{}\hbox
{\vrule height 2,1pt depth -1,7 pt width 3pt}\kern-6,2pt }
\newcommand{\ssmedint}{{}\hbox
{\vrule height 1,7pt depth -1,3 pt width 3pt}\kern-6,2pt }
\def\cnum#1{\global\advance\ContatoreCostanti by 1 \xdef#1{C_{\number \ContatoreCostanti}}#1}
\begin{document}
\title{Blow-up analysis
for some mean field equations involving probability measures from statistical hydrodynamics}
\author{T.~Ricciardi\thanks{Corresponding author} and G.~Zecca\\
\small{Dipartimento di Matematica e Applicazioni ``R.~Caccioppoli"}\\
\small{Universit\`{a} di Napoli Federico II}\\
\small{Via Cintia - 80126 Napoli - Italy}\\
\small{E-mail: tonricci@unina.it - g.zecca@unina.it }}
\date{}
\maketitle
\begin{abstract}
Motivated by the mean field equations with probability measure derived by Sawada-Suzuki and by Neri in
the context of the statistical mechanics description
of two-dimensional turbulence, we study the semilinear elliptic equation with probability measure:
\begin{equation*}
-\Delta v=\lambda\int_IV(\alpha,x,v)e^{\alpha v}\,\Pda
-\frac{\lambda}{|\Omega|}\iint_{I\times\Om}V(\alpha,x,v)e^{\alpha v}\,\Pda dx,
\end{equation*}
defined on a compact Riemannian surface.
This equation
includes the above mentioned equations of physical interest as special cases.
For such an equation we study the blow-up properties of solution sequences.
The optimal Trudinger-Moser inequality is also considered.

\bigskip

\noindent {\small {\bf Key words and phrases:} Mean field, Point vortices, Non-local elliptic equation, Exponential nonlinearity, Trudinger-Moser inequality.}

\noindent {\small {\bf 2000 Mathematics Subject Classification:}  76B03, 35B44, 76B44. }
\end{abstract}
\section{Introduction}
\label{sec:intro}
Motivated by several mean field equations recently derived in the context of
Onsager's statistical mechanics description of turbulence \cite{Onsager}, we study concentrating
sequences of solutions to the
following equation:
\begin{equation}
\label{genmf}
-\Delta v=\lambda\int_IV(\alpha,x,v)e^{\alpha v}\,\Pda
-\frac{\lambda}{|\Omega|}\iint_{I\times\Om}V(\alpha,x,v)e^{\alpha v}\,\Pda dx,
\end{equation}
where $\Omega$ is a compact two-dimensional orientable Riemannian manifold
without boundary, $I=[-1,1]$, $\calP\in\mathcal M(I)$ is a Borel measure,
$v\in H^1(\Om)$ is a function normalized by $\int_\Om v=0$, $\lambda>0$ and
$V(\alpha,x,v)$ is a \emph{functional} satisfying the condition
$\alpha V(\alpha,x,v)\ge0$,
as well as suitable bounds which will be specified below.
\par
A typical special case of physical interest is given by
\begin{equation*}
\label{VSS}
V(\alpha,x,v)=V_1(\alpha,x,v)=\frac{\alpha}{\int_\Om e^{\alpha v}\,dx},
\end{equation*}
in which case equation~\eqref{genmf} reduces to the mean field equation derived by Sawada and Suzuki in \cite{SawadaSuzuki}:
\begin{equation}
\label{SawadaSuzuki}
-\Delta v=\lambda\int_I\alpha \left(\frac{e^{\alpha v}}
{\int_\Om e^{\alpha v}\,dx}-\frac{1}{|\Om|}\right)\,\calP(d\alpha).
\end{equation}
In order to relate our results to the literature,
we note that under the further assumption $\calP=\delta_1$, the Dirac concentrated at $\alpha=1$,
equation~\eqref{SawadaSuzuki} reduces to the well known mean field equation
\begin{equation}
\label{standmfe}
-\Delta v=\lambda\left(\frac{e^v}{\int_\Om e^v\,dx}-\frac{1}{|\Om|}\right)
\end{equation}
extensively studied in recent years. See, e.g., \cite{sbook} and the references therein
for results and applications of \eqref{standmfe} to physics, biology and geometry.
Assuming instead that
\begin{equation}
\label{twomass}
 \calP=t\delta_1+(1-t)\delta_{-1},
\end{equation}
equation~\eqref{SawadaSuzuki}
reduces to the mean field sinh-Gordon type equation
derived in \cite{JM,PL}.
Several blow-up results for \eqref{SawadaSuzuki}--\eqref{twomass}
have been obtained in recent years
by Ohtsuka and Suzuki in \cite{os1,os2}, and applied to derive the best constant
for the corresponding Trudinger-Moser inequality.
A construction of two-sided blow up solutions was obtained in \cite{EW}.
A blow-up analysis for \eqref{SawadaSuzuki} is contained in \cite{ORS},
and the best constant for the corresponding Trudinger-Moser inequality will appear
in \cite{RS}.
\par
Another special case of physical interest, which is the main motivation to this work, is
given by
\begin{equation}
\label{neripotential}
V(\alpha,x,v)=V_2(\alpha,x,v)=\frac{\alpha}{\iint_{I\times\Om}e^{\alpha v}\,\Pda}.
\end{equation}
In this case, equation~\eqref{genmf} reduces to the mean field equation derived by Neri~\cite{ne}:
\begin{equation}
\label{NE}
- \Delta v = \lambda\,\frac{\int_I \alpha (e^{\alpha v}- \frac{1}{|\Om|} \int_\Om e^{\alpha v}dx)\,\calP (d\alpha)}
{\iint_{I\times \Om} e^{\alpha v}\,\calP (d\alpha)dx}.
\end{equation}
An existence result for solutions to equation \eqref{NE} under Dirichlet boundary conditions
was also obtained in \cite{ne}.
In view of the results in \cite{ORS}, it is natural to study the concentrating
sequences of solutions to \eqref{NE}.
Actually, since both equations are motivated by the same physical problem,
it is natural to compare these equations and to seek their common features
as well as their differences.
Taking this point of view, we study the general mean field equation \eqref{genmf} and
we derive blow-up properties which are common to equation~\eqref{SawadaSuzuki}
and equation~\eqref{NE}.
On the other hand, we will show that from the point of view of the Trudinger-Moser inequality,
the two equations exhibit different properties.
Indeed, while equation~\eqref{SawadaSuzuki} leads to an improved best constant
with respect to \eqref{standmfe}, somewhat unexpectedly
such a situation does \emph{not} occur for equation~\eqref{NE}.
\par
We organize this article as follows. In Section~\ref{sec:results} we state our main blow-up results
for \eqref{genmf}, namely Theorem~\ref{thm:firstblowup} and Theorem~\ref{thm:secondblowup}.
We note that some results, such as \eqref{limiteq}, are new even for
equation~\eqref{SawadaSuzuki}. In Section~\ref{sec:proofs} we carry out the blow-up analysis.
Although we follow the approach in \cite{ORS},
based on the consideration of measures defined on the product space
$I\times\Om$, some technical lemmas are stated under weaker and more natural assumptions.
In Section~\ref{sec:neri} we apply our results to the special cases of physical interest.
We prove some results specific to Neri's equation~\eqref{NE},
particularly in relation to the residual vanishing property and the optimal Trudinger-Moser inequality,
see Theorem~\ref{thm:residual} and Theorem~\ref{thm:TM}, respectively.
\par
\emph{Notation.}
In what follows, we denote by $C$ a general constant whose value may change from line to line.
For all $p\in\Om$ we denote by $\delta_p\in\mathcal M(\Om)$ the Dirac measure centered at $p$.
For all $\alpha\in I$ we denote by $\delta_\alpha\in\mathcal M(I)$ the Dirac measure centered at $\alpha$.
We denote by $dx$ the volume element on $\Om$ and by $|\Om|$ the volume of $\Om$.
When the integration variable is clear from the context, for simplicity we omit it.
\section{Main results}
\label{sec:results}
We define
\[
\mathcal E=\left\{v\in H^1(\Om):\ \int_\Om v=0\right\}.
\]
and we make the following assumptions on the functional $V$.
\begin{enumerate}
\item[(V1)]
$(\mathrm{sign}\,\alpha)\,V(\alpha,x,v)\ge0$ for all $(\alpha,x,v)\in I\times\Om\times\calE$;
\item[(V2)]
$\sup_\mathcal E\|V(\alpha,x,v(x))\|_{L^\infty(I\times\Om)}\le C_1$
for some constant $C_1>0$;
\item[(V3)]
$\iint_{I\times\Om}|V(\alpha,x,v)|e^{\alpha v}\,\Pda dx\le C_2$ for some constant $C_2>0$.
\end{enumerate}
We consider solution sequences $\{ v_n\}$, $\lambda_n\rightarrow \lambda_0$ to
\begin{equation}
\label{genmfn}
\left\{
\begin{split}
&-\Delta v_n = \lambda_n \int_I\left(V(\alpha,x,v_n)e^{\alpha v_n}-\frac{1}{|\Om|}
\int_\Om V(\alpha,x,v_n)e^{\alpha v_n}\,dx\right)\,\Pda\\
&\int_\Om v_n=0.
\end{split}\right.
\end{equation}
Following the approach of Brezis and Merle~\cite{bm}, see also Nagasaki and Suzuki~\cite{NS},
we begin by proving that the blow-up set
for concentrating solutions is finite and that a ``minimum mass" is necessary for blow-up to occur.
We define the blow-up sets:
\[
\mathcal S_\pm =\{ p\in\Om :\exists p_{\pm,n }\rightarrow p : v_n(p_{\pm,n}) \rightarrow \pm\infty)\}
\]
and denote $\mathcal S = \mathcal S_+\cup \mathcal S _-.$
We define the measures $\nu_{\pm,n} \in \mathcal M(\Om)$ by setting
\begin{equation}
\label{nin}
\nu_{\pm,n}=\lambda_n\int_{I_\pm}|V(\alpha,x,\vn)|e^{\alpha\vn}\,\Pda
\end{equation}
where $I_+ = [0, 1]$ and $I_- = [-1, 0)$. Since in view of (V3) we have
$\int_\Om \nu_{\pm,n}\leqslant C_2\lambda_n$, we may assume  that
$\nu_{\pm,n} {\stackrel{*}\rightharpoonup} \nu_\pm$ for some measure $\nu_\pm \in\mathcal M(\Om)$.
\begin{thm}
\label{thm:firstblowup}
Assume (V1)--(V2)--(V3).
Let ${v_n}$ be a solution sequence to \eqref{genmfn} with $\lambda_n\rightarrow \lambda_0$. Then, the following alternative holds.
\begin{enumerate}
\item[i)] Compactness: $\limsup_{n\rightarrow\infty}\|v_n\|_\infty <+\infty$. There exist a solution $v \in\mathcal E$
to \eqref{genmf} with $\lambda=\lambda_0$
and a subsequence $\{v_{n_k}\}$ such that $v_{n_k}\rightarrow v$
in $\mathcal E$.
\item[ii)] Concentration: $\limsup_{n\rightarrow\infty}\|v_n\|_{L^\infty} =+\infty$.
 The sets $\mathcal S_\pm$ are finite and $\mathcal S = \mathcal S_- \cup\mathcal S_+ \neq \emptyset$.
For some $s_\pm\geqslant 0 $, $s_\pm\in L^1(\Om)$ we have
    \[
\nu_\pm = s_\pm dx+ \sum_{p\in\mathcal S_\pm} n_{\pm,p}\delta_p
\]
with $n_{\pm,p}\geqslant 4\pi$ for all $p\in \mathcal S$. Moreover, there exist
$v\in H^1_{\loc}(\Om\setminus \mathcal S)$, $k\in L^\infty(I\times\Om)$ and $c_0\in\R$ such that $v_n\rightarrow v $ in
$H^1_{\loc}(\Om\setminus \mathcal S)$ and
\begin{equation}
\label{limiteq}
\left\{\begin{split}
&- \Delta v = \lambda_0 \int_Ik(\alpha,x)e^{\alpha v} \calP (d\alpha)
+\sum_{p\in\mathcal S_+} n_{+,p}\delta_p - \sum_{p\in\mathcal S_-} n_{-,p}\delta_p - c_0\qquad\mbox{ in }\Om,\\
&\int_\Om v=0.
\end{split}\right.
\end{equation}
\end{enumerate}
\end{thm}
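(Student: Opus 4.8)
The plan is to follow the Brezis--Merle/Nagasaki--Suzuki machinery, adapted to the product-space measures $\nu_{\pm,n}$ as in \cite{ORS,bm,NS}. Write $f_n=-\Delta v_n$, so $f_n=\lambda_n\int_I\big(V(\alpha,x,v_n)e^{\alpha v_n}-\tfrac1{|\Om|}\int_\Om V(\alpha,x,v_n)e^{\alpha v_n}\big)\Pda$, and note that by (V3) the $L^1$ norm $\|f_n\|_{L^1(\Om)}$ is uniformly bounded, while by construction $\int_\Om f_n=0$ (so $v_n$ is well-defined in $\calE$ by elliptic regularity up to the additive normalization). First I would recall the dichotomy: either $\{v_n\}$ is bounded in $L^\infty$, or it is not. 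In the bounded case, (V2) and dominated convergence give that the right-hand side of \eqref{genmfn} is bounded in every $L^p$, hence by elliptic $L^p$-estimates $\{v_n\}$ is bounded in $W^{2,p}\hookrightarrow C^{1,\beta}$; extracting a subsequence, $v_{n_k}\to v$ in $\calE$ (indeed in $C^1$), and passing to the limit in the equation — using that $V(\alpha,x,v_{n_k})e^{\alpha v_{n_k}}\to V(\alpha,x,v)e^{\alpha v}$ uniformly on $I\times\Om$ by (V1)--(V2) and the continuity of $V$ in its arguments — yields a solution $v\in\calE$ of \eqref{genmf} with $\lambda=\lambda_0$. This proves alternative i).

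For alternative ii), I would first establish finiteness of $\mathcal S_\pm$ and the minimum-mass bound. The key local tool is the Brezis--Merle alternative applied to the signed pieces: split $f_n=g_{+,n}-g_{-,n}-(\text{const}_n)$ where $g_{\pm,n}=\lambda_n\int_{I_\pm}|V(\alpha,x,v_n)|e^{\alpha v_n}\Pda\ge0$ (using (V1) to control signs), so that $\nu_{\pm,n}=g_{\pm,n}\,dx$. On a ball $B_r(p)$ with $\nu_+(B_r(p))<4\pi-\delta$ and $\nu_-(B_r(p))$ finite, one solves $-\Delta w_n^+=g_{+,n}$, $w_n^+=0$ on $\partial B_r$; by the Brezis--Merle inequality $e^{(1+\epsilon)|w_n^+|}$ is bounded in $L^1(B_{r/2})$, while the harmonic-type remainder $v_n-w_n^++w_n^-$ is locally bounded above by the mean-value property and the $L^1$ bound on $v_n$ (Jensen). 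Combined with the pointwise bound $|V|\le C_1$ from (V2), this gives $g_{\pm,n}$ bounded in $L^p_{loc}$ near $p$, hence $p\notin\mathcal S$. Therefore $\mathcal S_+\subseteq\{\,p:\nu_+(\{p\})\ge 4\pi\,\}$ and symmetrically for $\mathcal S_-$; since $\nu_\pm$ are finite measures this forces $\mathcal S_\pm$ to be finite, and the atoms have mass $n_{\pm,p}\ge4\pi$. Writing the Radon--Nikodym/atomic decomposition $\nu_\pm=s_\pm\,dx+\sum_{p\in\mathcal S_\pm}n_{\pm,p}\delta_p$ (the diffuse part lies in $L^1$ because on the complement of $\mathcal S$ the densities $g_{\pm,n}$ are locally $L^p$-bounded, so weak-$*$ limits are $L^1_{loc}$ and globally $L^1$ by the total-mass bound), gives the stated structure, and $\mathcal S\neq\emptyset$ since in the concentration case at least one $p_{\pm,n}\to p$ with $v_n(p_{\pm,n})\to\pm\infty$ exists by definition.

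It remains to derive the limit equation \eqref{limiteq}. Away from $\mathcal S$, the $L^p_{loc}$-boundedness of the right-hand side gives $v_n\to v$ in $H^1_{loc}(\Om\setminus\mathcal S)$ (in fact $W^{2,p}_{loc}$), so $v\in H^1_{loc}(\Om\setminus\mathcal S)$ solves the equation obtained by passing to the limit in the RHS. The densities $V(\alpha,x,v_n)$ are bounded in $L^\infty(I\times\Om)$ by (V2), so a weak-$*$ limit $k\in L^\infty(I\times\Om)$ exists along a subsequence; one checks, using the local strong convergence $v_n\to v$ and a Fatou/Vitali argument with (V3) controlling the tails, that $V(\alpha,x,v_n)e^{\alpha v_n}\,\Pda\,dx\rightharpoonup k(\alpha,x)e^{\alpha v}\,\Pda\,dx$ as measures on $(I\times\Om)\setminus(I\times\mathcal S)$, so the absolutely continuous part of the limiting RHS is $\lambda_0\int_I k(\alpha,x)e^{\alpha v}\,\calP(d\alpha)$; the singular part is carried by $\mathcal S$ and, by the concentration of $\nu_{\pm,n}$ onto the atoms, contributes exactly $\sum_{p\in\mathcal S_+}n_{+,p}\delta_p-\sum_{p\in\mathcal S_-}n_{-,p}\delta_p$; finally the constant subtraction $\tfrac{\lambda_n}{|\Om|}\iint V(\alpha,x,v_n)e^{\alpha v_n}$ converges to a constant $c_0\in\R$ along a subsequence, by (V3). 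Testing against $C_c^\infty(\Om\setminus\mathcal S)$ and then noting both sides are measures of finite mass extends the identity to all of $\Om$, giving \eqref{limiteq}; the normalization $\int_\Om v=0$ persists in the limit.

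\medskip

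\noindent\emph{Main obstacle.} The delicate point is the identification of the absolutely continuous part of the limiting nonlinearity — namely producing $k\in L^\infty$ and justifying $V(\alpha,x,v_n)e^{\alpha v_n}\,\Pda\,dx\rightharpoonup k\,e^{\alpha v}\,\Pda\,dx$. Because $V$ is a \emph{functional} of $v$ (not merely a function of $(\alpha,x,v)$), one cannot simply pass to the limit pointwise; instead one must combine the uniform $L^\infty$ bound (V2), the local strong $H^1$ (hence a.e.) convergence of $v_n$ off $\mathcal S$, and the integrability control (V3) near $\mathcal S$, and argue that the $L^\infty$-weak-$*$ limit $k$ of $V(\alpha,x,v_n)$ times $e^{\alpha v}$ captures the full absolutely continuous limit — i.e., that no extra mass leaks into the diffuse part beyond what is seen on $\Om\setminus\mathcal S$. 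This is exactly where the product-measure formalism of \cite{ORS} is used, and where the weaker hypotheses (V1)--(V3) (in place of the explicit forms $V_1,V_2$) must be shown to suffice.
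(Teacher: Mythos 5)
Your proposal is correct and follows essentially the same Brezis--Merle/Nagasaki--Suzuki route as the paper: split the nonlinearity into the signed pieces $\nu_{\pm,n}$, control each piece by a Brezis--Merle type inequality, conclude that atoms of $\nu_\pm$ carry mass $\ge 4\pi$ and coincide with $\mathcal S_\pm$, and identify the absolutely continuous part of the limit by taking $k_n=V(\alpha,x,v_n)\stackrel{*}{\rightharpoonup}k$ in $L^\infty(I\times\Om)$ together with strong $L^r$ convergence of $v_n$ off $\mathcal S$ (which is exactly what the paper does in the last part of its proof).

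The one genuine packaging difference is where the positive/negative splitting is carried out. You work locally: on each coordinate ball you solve Dirichlet problems $-\Delta w_n^\pm=g_{\pm,n}$, $w_n^\pm=0$, use Brezis--Merle's integral bound, and treat $v_n-w_n^++w_n^-$ as a harmonic remainder — this is the original Brezis--Merle template. The paper instead makes the splitting \emph{global} via the Green's function, setting $u_{\pm,n}=G\star\nu_{\pm,n}$, so that $v_n=u_{+,n}-u_{-,n}$ with $u_{\pm,n}$ uniformly bounded below; this turns \eqref{genmfn} into a Liouville \emph{system} for $(u_{+,n},u_{-,n})$ with potential $W_n(\alpha,x)=|V(\alpha,v_n)|e^{-\alpha u_{\mp,n}}$ which is uniformly bounded ($p=\infty$), so that a manifold version of Brezis--Merle (Lemma~\ref{lem:bmonmflds}) can be applied directly to each component. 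The global Green's function route is cleaner here because the lower bound on $u_{\mp,n}$ replaces the ball-by-ball control of the ``other sign'', and it makes the identification $\mathcal S_\pm=\{p:\nu_\pm(\{p\})\ge4\pi\}$ symmetric and automatic. A small imprecision in your sketch, consistent with this difference: from $\nu_+(B_r(p))<4\pi-\delta$ and $\nu_-(B_r(p))$ merely finite you can only conclude $v_n$ bounded \emph{above} near $p$, hence $p\notin\mathcal S_+$ (not $p\notin\mathcal S$), and likewise it is $g_{+,n}$ (not $g_{\pm,n}$) that you control in $L^p_{\rm loc}$; the symmetric claim for $\mathcal S_-$ requires $\nu_-(\{p\})<4\pi$. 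Your identification of the delicate point — producing $k\in L^\infty(I\times\Om)$ and passing to the limit in the nonlinear term with a functional $V$ — is accurate, and the remedy you propose (weak-$*$ limit of $V(\alpha,x,v_n)$ plus a.e.\ convergence of $v_n$ away from $\mathcal S$, with (V3) controlling the mass near $\mathcal S$) is the one the paper uses.
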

Under stronger assumptions on $V$, the blow-up results may be refined.
Following \cite{ORS} we consider measures defined on the product space $I\times \Om$.
We assume that $V$ does not depend on $x$, namely $V=V(\alpha,v)$
and
\begin{enumerate}
\item[(V0)]
$\nabla_xV(\alpha,v)=0$.
\end{enumerate}
We also strengthen assumptions (V2)--(V3) above as follows:
\begin{enumerate}
\item[(V2')]
$\sup_\mathcal E\|\alpha^{-1}V(\alpha,v)\|_{L^\infty(I)}\le C_1'$
for some constant $C_1'>0$;
\item[(V3')]
$\iint_{I\times\Om}|\alpha^{-1}V(\alpha,v)|e^{\alpha v}\,\Pda dx\le C_2'$ for some constant $C_2'>0$.
\end{enumerate}
For every fixed $\alpha \in I$ we define $\mu_\alpha^n (dx) \in \mathcal M(\Om)$ by setting
\begin{equation}
\label{mu-gn}
\mu_\alpha^n (dx)=\lambda_n\frac{V(\alpha,v_n)}{\alpha}e^{\alpha v_n}\,dx.
\end{equation}
We consider the sequence of measures $\mu_n=\mu_n(d\alpha dx)\in \mathcal M (I\times \Om) $ defined by
\begin{equation}\label{mu-n}
\mu_n(d\alpha dx)= \mu^n_\alpha (dx) \calP(d\alpha)
=\lambda_n\frac{V(\alpha,v_n)}{\alpha}e^{\alpha v_n}\,dx\Pda.
\end{equation}
In view of (V3'), for large values of $n$ we have:
\begin{eqnarray*}
\mu_n(I\times\Om)=\iint_{I\times\Om}\mu^n_\alpha (dx) \calP(d\alpha)
\leqslant C_2'(\lambda_0+1).
\end{eqnarray*}
Hence, upon extracting a subsequence, we may assume that \begin{equation}\label{mu}
\mu_n\stackrel{*}{\rightharpoonup}\mu  \mbox{ for some Borel measure } \mu\in\mathcal M (I\times \Om).
\end{equation}
In the next result we describe some properties of $\mu$.
\begin{thm}
\label{thm:secondblowup}
Suppose that $V$ satisfies (V0)--(V1)--(V2')--(V3').
Let $v_n$ be a solution sequence to \eqref{genmfn} with $\lambda_n\to\lambda_0$.
The following properties hold.
\begin{enumerate}
\item[(i)]
The singular part of $\mu$ has a ``separation of variables'' form:
\begin{equation}\label{zita}
\mu(d\alpha dx)= \sum_{p\in \mathcal S}\zeta _p(d\alpha) \delta_p(dx) + r(\alpha,x) \calP(d\alpha) dx.
\end{equation}
Here, $\zeta_p\in\mathcal M(I)$ and
$r\in L^1 (I\times\Om)$.
\item[(ii)]For every $p\in\mathcal S$ the following relation is satisfied
\begin{equation}\label{iii}
8\pi \int_I \zeta_p(d\alpha) = \left[ \int_I\alpha \zeta_p(d\alpha)\right]^2.
\end{equation}
\item[(iii)]For every $p\in\mathcal S$ it holds
\[
\int_{I_{\pm}}|\alpha| \zeta_p(d\alpha) =n_{\pm,p}\qquad \qquad\int_{I_\pm}|\alpha|r(\alpha,x)\calP(d\alpha)=s_\pm(x),
\]
where $n_{\pm,p}$ and $s_\pm(x)$ are as in Theorem~\ref{thm:firstblowup}. Moreover, for every $p\in \mathcal S_\pm \setminus \mathcal S_\mp$
\[
\int_{I_\mp} |\alpha|\zeta_p(d\alpha)=0.
\]
\end{enumerate}
\end{thm}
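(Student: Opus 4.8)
\emph{Plan of proof.} The strategy is to use the blow-up picture already furnished by Theorem~\ref{thm:firstblowup} --- finiteness of $\mathcal S$, the structure of $\nu_\pm$, and the limit equation \eqref{limiteq} --- and to extract the quadratic relation \eqref{iii} from a local Rellich--Pohozaev identity; I would prove (i), then (iii), then (ii), since the value of $\int_I\alpha\,\zeta_p(d\alpha)$ identified in (iii) is needed in (ii). For (i), perform the Lebesgue decomposition of $\mu$ with respect to $\calP(d\alpha)\,dx$. On the open set $I\times(\Om\setminus\mathcal S)$, Theorem~\ref{thm:firstblowup} together with elliptic estimates of Brezis--Merle type gives local $L^\infty$ bounds on $v_n$ and $v_n\to v$ in $C^1\loc$; by (V2') the densities $\lambda_n\alpha^{-1}V(\alpha,v_n)e^{\alpha v_n}$ are then locally bounded and converge locally uniformly there, so the restriction of $\mu$ to $I\times(\Om\setminus\mathcal S)$ is absolutely continuous, equal to $r(\alpha,x)\,\calP(d\alpha)dx$ with $r\ge0$ by (V1). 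Since $\mathcal S$ is finite, the restriction of $\mu$ to $I\times\{p\}$ is a finite measure which, viewed in $\mathcal M(I\times\Om)$, equals $\zeta_p(d\alpha)\,\delta_p(dx)$; summing over $p\in\mathcal S$ gives \eqref{zita}. Finally $\mu(I\times\Om)\le C_2'(\lambda_0+1)$ by (V3'), so $r\in L^1(I\times\Om)$ and $\zeta_p\in\mathcal M(I)$.

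\textbf{Part (iii)} uses only weak-$*$ convergence. Since $(\mathrm{sign}\,\alpha)\,V=|V|$ and $\alpha^{-1}V=|\alpha|^{-1}|V|\ge0$ by (V1), the definition \eqref{nin} gives $\nu_{\pm,n}(dx)=\int_{I_\pm}|\alpha|\,\mu_n(d\alpha\,dx)$. For $\phi\in C(\Om)$ the function $(\alpha,x)\mapsto|\alpha|\mathbf 1_{I_\pm}(\alpha)\phi(x)$ is continuous on $I\times\Om$ (it vanishes at $\alpha=0$), so passing to the limit in $\int_\Om\phi\,d\nu_{\pm,n}=\iint_{I\times\Om}|\alpha|\mathbf 1_{I_\pm}\phi\,d\mu_n$ yields $\nu_\pm(dx)=\int_{I_\pm}|\alpha|\,\mu(d\alpha\,dx)$. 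Inserting \eqref{zita} and comparing with $\nu_\pm=s_\pm\,dx+\sum_p n_{\pm,p}\delta_p$ from Theorem~\ref{thm:firstblowup} gives the two identities of (iii); and if $p\in\mathcal S_\pm\setminus\mathcal S_\mp$ then $\nu_\mp$ has no atom at $p$, whence $\int_{I_\mp}|\alpha|\,\zeta_p(d\alpha)=\nu_\mp(\{p\})=0$. In particular $\int_I\alpha\,\zeta_p(d\alpha)=n_{+,p}-n_{-,p}=:\beta_p$, which is precisely the coefficient of $\delta_p$ in \eqref{limiteq}.

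\textbf{Part (ii).} Fix $p\in\mathcal S$; take $p=0$ and $r>0$ small with $\overline{B_r}\cap\mathcal S=\{0\}$. Writing $Q_n:=\lambda_n\int_I V(\alpha,v_n)e^{\alpha v_n}\,\Pda$, we have $-\Delta v_n=Q_n-c_n$ on $B_r$. Multiplying by $x\cdot\nabla v_n$ and integrating over $B_r$ gives the Rellich--Pohozaev identity
\begin{equation*}
r\int_{\de B_r}(\de_\nu v_n)^2-\frac r2\int_{\de B_r}|\nabla v_n|^2=-\int_{B_r}(Q_n-c_n)\,(x\cdot\nabla v_n)\,dx .
\end{equation*}
Since $V$ is $x$-independent by (V0), one has $V(\alpha,v_n)e^{\alpha v_n}\nabla v_n=\alpha^{-1}V(\alpha,v_n)\,\nabla_x e^{\alpha v_n}$ with $\alpha^{-1}V(\alpha,v_n)$ constant in $x$, so one integration by parts gives
\begin{equation*}
\int_{B_r}Q_n\,(x\cdot\nabla v_n)\,dx=r\iint_{I\times\de B_r}\mu_n-2\iint_{I\times B_r}\mu_n ,
\end{equation*}
while $\int_{B_r}c_n(x\cdot\nabla v_n)\,dx=c_n\big(r\int_{\de B_r}v_n-2\int_{B_r}v_n\big)$. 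Now let $n\to\infty$ --- for every such $r$, $v_n\to v$ in $C^1(\de B_r)$, and for all but countably many $r$ also $\mu(I\times\de B_r)=0$ --- and then $r\to0$. The right-hand side of the Rellich identity tends to $2\int_I\zeta_p(d\alpha)$, the $c_n$-term tends to $0$, and, since by \eqref{limiteq} the limit satisfies $-\Delta v=\beta_p\delta_0+(\text{an }L^1\text{ density})$ near $0$ --- so that $v=-\tfrac{\beta_p}{2\pi}\log|x|+(\text{a lower-order term contributing nothing to the boundary integrals as }r\to0)$ --- the boundary terms tend to $\tfrac{\beta_p^2}{2\pi}-\tfrac{\beta_p^2}{4\pi}=\tfrac{\beta_p^2}{4\pi}$. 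Equating the two limits yields $8\pi\int_I\zeta_p(d\alpha)=\beta_p^2=\big(\int_I\alpha\,\zeta_p(d\alpha)\big)^2$, that is \eqref{iii}.

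\textbf{Main difficulty.} The technical heart is the double limit in Part (ii): one must select radii along which $v_n\to v$ in $C^1(\de B_r)$ and $\mu(I\times\de B_r)=0$ (possible for all small $r$ outside a countable set, since $\mathcal S$ is finite and $\mu_n(I\times\Om)$ is uniformly bounded), pass to the limit in $\iint_{I\times\de B_r}\mu_n$, and --- above all --- show that the $L^1$ residual part of $-\Delta v$ contributes nothing to the boundary integrals as $r\to0$, which needs a little extra regularity of its Newtonian potential (available from (V2') and the form of the nonlinearity) or an approximation argument. The remaining ingredients --- the local $L^\infty$ bounds, the $C^1\loc$ convergence off $\mathcal S$, and the Rellich computation itself --- are routine once Theorem~\ref{thm:firstblowup} is in hand.
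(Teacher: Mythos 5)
Your Part (i) matches the paper essentially verbatim. Your Part (iii) is in fact a small simplification of the paper's argument: you observe that $|\alpha|\mathbf 1_{I_\pm}(\alpha)\phi(x)$ is already continuous on $I\times\Om$ (since $I_+=[0,1]$, $I_-=[-1,0)$, and $|\alpha|$ vanishes at $\alpha=0$), so the weak-$*$ limit $\nu_\pm(dx)=\int_{I_\pm}|\alpha|\,\mu(d\alpha\,dx)$ can be read off directly; the paper instead introduces a continuous cutoff $\psi$ equal to $1$ on $I_+$ and $0$ on $[-1,-\varepsilon]$, derives the identity for each $\varepsilon$, and then lets $\varepsilon\to0$. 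Both are correct; yours saves an approximation step.

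Part (ii) is where you genuinely diverge. The paper does not use a Rellich--Pohozaev identity. Instead it deploys Suzuki's symmetrization trick: it writes $\nabla\mu_\alpha=\alpha\,\mu_\alpha\int_I\alpha'(\nabla G)\star\mu_{\alpha'}\,\calP(d\alpha')$, multiplies by a $C^1$ vector field $\chi$, symmetrizes the Green's kernel to form $\rho_\chi(x,x')=\tfrac12[\chi(x)\cdot\nabla_xG+\chi(x')\cdot\nabla_{x'}G]$, and obtains the identity $\iint(\dive\chi)\,\mu^n_\alpha\,\calP\,dx=-\iint_{I^2}\alpha\alpha'\iint_{\Om^2}\rho_\chi\,\mu^n_\alpha\mu^n_{\alpha'}$. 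Choosing $\chi=2X\varphi$ in an isothermal chart centered at $p$ makes $\rho_\chi$ \emph{continuous on $\Om^2$} (the dipole singularity of $\nabla G$ cancels), so both sides pass to the weak-$*$ limit at the level of the product measure $\mu_n\otimes\mu_n$; inserting the decomposition \eqref{zita} and shrinking $r$ gives \eqref{iii} without ever touching the local profile of $v$ near $p$.

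Your Pohozaev route is plausible and is the classical way to get mass quantization in the scalar setting, but the gap you flag at the end is real and is exactly the thing the symmetry trick is engineered to bypass. To finish your argument you must show two things about the boundary contributions as $r\to0$: that the surface term $r\iint_{I\times\de B_r}\mu_n$ vanishes (uniformly enough in $n$), and that the non-singular part $w=v+\tfrac{\beta_p}{2\pi}\log|x|$ contributes nothing to $r\int_{\de B_r}(\de_\nu v)^2-\tfrac r2\int_{\de B_r}|\nabla v|^2$. Neither follows from $r\in L^1(I\times\Om)$ and $v\in H^1_{\loc}(\Om\setminus\mathcal S)$ alone: since $e^{\alpha v}\sim|x|^{-\alpha\beta_p/2\pi}$ near $p$ and $\beta_p$ can have absolute value $\ge4\pi$, the surface integrals can a priori grow like $r^{1-\alpha\beta_p/2\pi}$, and the Newtonian potential $w$ of an $L^1$ right-hand side is only $W^{1,q}$ for $q<2$, not $W^{1,2}$, so the cross terms $\int_{\de B_r}\nabla(\log|x|)\cdot\nabla w$ need an extra argument (e.g., a coarea/diagonal selection of good radii, or improved regularity of $k(\alpha,\cdot)e^{\alpha v}$). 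Until these points are controlled you do not have a proof. You would either need to establish the "simple blow-up" local profile (as in Li--Shafrir type arguments) before applying Pohozaev, or switch to the measure-level symmetry identity as the paper does; the latter requires only the weak-$*$ convergence you already have.
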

\section{A blow-up analysis}
\label{sec:proofs}
We begin by recalling some preliminary results.
We first provide an extension of a key result from \cite{bm} to the case of potentials defined on product spaces,
following the approach in \cite{ORS}.
We actually weaken the assumptions in \cite{ORS} and derive a somewhat more natural formulation.
Let $D\subset \Rd$ be a bounded domain and for every $a\in \R $ let $a^+$ be the positive part of $a$, $a^+=\max\{a,0\}$.
\begin{lem}
\label{lem:bm}
Let $(u_n)$ be a solution sequence to
\[
-\Delta u_n= \int_{I_+} W_n(\alpha,x)e^{\alpha u_n}\calP(d\alpha) \qquad \mbox{in }D,
\]
where $W_{\alpha,n}\geq 0$ verifies $\|\int_{I_+}W_{\alpha,n} \calP(d\alpha)\|_{L^p(D)}\leqslant C$,
$p\in (1,\infty]$ and $\|u_n^+\|_{L^1(D)}\leqslant C$. Suppose that for every $n\in\mathbb N$ we have
\begin{equation}\label{bme2}
\iint_{D\times I_+}W_n(\alpha,x)e^{\alpha u_n} \calP(d\alpha)dx \leqslant\ee_0<
\frac{4\pi}{p'},
\end{equation}
where $p' = p/(p -1)$ is the conjugate exponent to p. Then, $\{u_n^+ \}$ is bounded in $L^\infty_{loc}(D)$.
\end{lem}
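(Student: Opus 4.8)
The plan is to reduce Lemma~\ref{lem:bm} to the classical Brezis--Merle alternative \cite{bm} by replacing the $\alpha$-averaged right-hand side with a single $L^p$ potential, and then to run the standard local splitting argument, taking care throughout that only the one-sided quantity $u_n^+$, and not $u_n$ itself, is under control in $L^1$.

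First I would set $f_n:=\int_{I_+}W_n(\alpha,\cdot)e^{\alpha u_n}\calP(d\alpha)\ge0$ and $\widetilde W_n:=\int_{I_+}W_n(\alpha,\cdot)\calP(d\alpha)\ge0$, so that $\|\widetilde W_n\|_{L^p(D)}\le C$ by hypothesis. Since $\alpha\in I_+=[0,1]$ one has the pointwise bound $e^{\alpha u_n(x)}\le e^{u_n^+(x)}$ (if $u_n(x)\ge0$ then $\alpha u_n\le u_n=u_n^+$; if $u_n(x)<0$ then $\alpha u_n\le0$), whence $0\le f_n\le\widetilde W_n\,e^{u_n^+}$ in $D$; moreover $\int_D f_n\,dx=\iint_{D\times I_+}W_ne^{\alpha u_n}\calP(d\alpha)dx\le\ee_0<4\pi/p'$ by \eqref{bme2}, and the same $L^1$ bound then holds on every subdomain of $D$. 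This is the only place where the product structure of the nonlinearity genuinely enters; from here the argument is the scalar one.

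Next, fix $x_0\in D$ and $R>0$ with $B_{2R}:=B_{2R}(x_0)\subset D$, and decompose $u_n=w_n+h_n$ in $B_{2R}$, where $-\Delta w_n=f_n$ in $B_{2R}$, $w_n=0$ on $\partial B_{2R}$, and $h_n$ is harmonic in $B_{2R}$. Since $f_n\ge0$, the maximum principle gives $w_n\ge0$, hence $h_n=u_n-w_n\le u_n^+$, so $\|h_n^+\|_{L^1(B_{2R})}\le\|u_n^+\|_{L^1(D)}\le C$; as $h_n^+$ is nonnegative and subharmonic, the mean value inequality for subharmonic functions yields $\sup_{B_R}h_n^+\le CR^{-2}\|h_n^+\|_{L^1(B_{2R})}\le C$. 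On the other hand, applying the Brezis--Merle inequality \cite{bm} to $w_n$ (with $\|f_n\|_{L^1(B_{2R})}\le\ee_0$) I may pick $\delta\in(0,4\pi-p'\ee_0)$, a nonempty interval because $\ee_0<4\pi/p'$, and obtain $\int_{B_{2R}}e^{qw_n}\le C$ with $q:=(4\pi-\delta)/\ee_0>p'$; that is, $\{e^{w_n}\}$ is bounded in $L^q(B_{2R})$ with $q>p'$. Then on $B_R$ we have $u_n^+\le w_n+h_n^+\le w_n+C$, hence $f_n\le C\,\widetilde W_n\,e^{w_n}$, and Hölder's inequality gives $\|f_n\|_{L^{q_0}(B_R)}\le C\|\widetilde W_n\|_{L^p(B_R)}\|e^{w_n}\|_{L^q(B_R)}\le C$ with $1/q_0=1/p+1/q<1/p+1/p'=1$, so $q_0>1$. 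Finally, splitting once more $u_n=\bar w_n+\bar h_n$ in $B_R$ ($-\Delta\bar w_n=f_n$ in $B_R$, $\bar w_n=0$ on $\partial B_R$, $\bar h_n$ harmonic), elliptic regularity together with the Sobolev embedding $W^{2,q_0}(B_R)\hookrightarrow L^\infty(B_R)$ (valid in two dimensions for $q_0>1$) gives $\|\bar w_n\|_{L^\infty(B_R)}\le C\|f_n\|_{L^{q_0}(B_R)}\le C$; then $\bar h_n^+\le u_n^++|\bar w_n|$ is bounded in $L^1(B_R)$, so $\sup_{B_{R/2}}\bar h_n^+\le C$ by the mean value inequality again, and therefore $u_n^+\le|\bar w_n|+\bar h_n^+\le C$ on $B_{R/2}(x_0)$. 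Since $x_0\in D$ is arbitrary, a covering argument yields boundedness of $\{u_n^+\}$ in $L^\infty_{loc}(D)$.

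I do not expect a serious obstacle here: once the reduction above is in place, this is exactly the scalar Brezis--Merle argument. The two genuinely new points are both elementary, namely collapsing the $\alpha$-integral onto the single potential $\widetilde W_n$ via $e^{\alpha u_n}\le e^{u_n^+}$ (this is precisely where the restriction to $I_+$ is used; for $\alpha<0$ one would instead need a lower bound on $u_n$), and organising every estimate so that it relies only on the $L^1$ bound for $u_n^+$, which is possible because $f_n\ge0$ forces $w_n\ge0$ and makes $h_n^+$ subharmonic.
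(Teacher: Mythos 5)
Your proof is correct and follows essentially the same route as the paper's: reduce the $\alpha$-integrated nonlinearity to a scalar $L^p\times L^1$ product via $e^{\alpha u_n}\le e^{u_n^+}$ on $I_+$, split into a zero--Dirichlet part and a harmonic part, invoke Brezis--Merle Theorem~1 to get the exponential of the Dirichlet part into $L^q$ with $q>p'$, and close with H\"older plus elliptic regularity. The only cosmetic differences are that you exploit the maximum principle ($w_n\ge0$, hence $h_n^+\le u_n^+$) to bound the harmonic part directly by $u_n^+$ rather than via an intermediate $L^1$ estimate on the Newtonian potential, and that you perform a second splitting on the smaller ball where the paper instead appeals to interior elliptic estimates.
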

\begin{proof}
Without loss of generality we may assume that $D=B_R.$ We split $u_n$ as $u_{1n}+u_{2n}$ where $u_{1n}$ is the solution of
\begin{equation}\label{bme}
\left\{
\begin{split}
&-\Delta u_{1n}= \int_{I_+}W_n(\alpha,x)e^{\alpha u_n} \mbox{ in }B_R\\
&u_{1n}=0 \mbox{ on } \partial B_R\\
\end{split}
\right.
\end{equation}
so that $\Delta u_{2n}=0$ in $B_R.$ By the mean value theorem for harmonic functions, \eqref{bme} and assumption \eqref{bme2} we have
\[
\| u ^+_{2n}\|_{L^\infty(B_{R/2})}\leqslant C\|u_{2n}^+\|_{L^1(B_R)} \leqslant C \left[\|u^+_{n}\|_{L^1(B_{R})} +\|u_{1n}\|_{L^1(B_{R})}\right]\leqslant C
\]
We define
\[
\fip_n = \int_{I_+} W_n(\alpha,x) e^{\alpha u_n} \calP ( d \alpha)
\]
so that, by \eqref{bme2} we have
\begin{equation}\label{fin}
\|\fip_n\|_{L^1(B_R)}\leqslant \ee_0 < \frac{4\pi}{p'}.
\end{equation}
By \cite{bm}, Theorem~1, for any $\delta\in (0,4\pi)$ we have
\[
\int_D \exp{\left[ \frac{(4\pi-\delta) |u_{1n}(x)|}{\|\fip\|_{L^1}}\right]dx } \leqslant \frac{4\pi^2}{\delta}(\mbox{diam } D)^2.
\]
\noindent Moreover, since $\ee_0<\frac{4\pi}{p'}$ there exists $\delta_0 \in (0,4\pi )$ such that $\ee_0=\frac{4\pi-\delta_0}{p'}$. Then, for  $\bar \delta\in (0,\delta_0)$, using \eqref{fin} we have
\begin{equation}\label{u1n}
\int_{B_{R/2}} \exp{\left[ (p'+ \eta) |u_{1n}(x)|\right]dx } \leqslant
\int_{B_{R/2}} \exp{\left[ \frac{(4\pi-\bar \delta) |u_{1n}(x)|}{\|\fip\|_{L^1({B_{ R}})}}\right]dx } \leqslant C
\end{equation}
where $\eta=\frac{\delta_0-\bar \delta}{4\pi-\delta_0}p'$. Hence, the sequence $\{e^{|u_{1n}|}\}$ is bounded in $L^{p'+\eta}(B_R)$ so that the sequence $\{e^{u_{n}^+}\}$ is bounded in $L^{p'+\eta}(B_{R/2})$ for some $\eta>0.$
On the other hand,
\begin{equation*}
\begin{split}
\int_{B_{R/2}}\left| \int_{I_+} W_n(\alpha,x) e^{\alpha u_n} \calP(d\alpha) \right|^r dx&\leqslant \int_{B_{R/2}}e^{ r u_n^+} \left( \int_{I_+} W_n(\alpha,x) \calP(d\alpha) \right) ^r dx\\
&\leqslant \left( \int_{B_{R/2}}e^{\frac {p r}{p-r}u_n^+} \right)^{\frac{p-r}{p}}\left(\int_{B_{R/2}} \left(\int_{I_+} W_n(\alpha,x) \calP(d\alpha) \right)^p dx\right)^{\frac rp}\\
&= \|e^{u_n^+}\|^r_{L^{\frac{pr}{p-r}}(B_{R/2})} \left\|\int_{I_+} |W_{\alpha,n}| \calP(d\alpha)\right \|_{L^p (B_{R/2})}^r. \\
\end{split}
\end{equation*}
If we choose $r\in (1,p)$ in order to have $pr/(p-r)= p'+\eta,$ by \eqref{bme} and the elliptic estimates we see that $u_{1n}$ is bounded in $L^\infty(B_{R/4})$.
Therefore $\{u_n^+\}$ is bounded in $L^\infty(B_{R/4})$.
\end{proof}
Now we recall the following result for equations defined on manifolds obtained in \cite{ORS} (see also \cite{os}). Let $(\Om, g)$ be a Riemannian surface. We consider solution sequences $\{u_n\}$ to the equation
\begin{equation}\label{20}
-\Delta u_n = \int_{I_+}W_n(\alpha,x) e^{\alpha u_n} \calP (d\alpha) + f_n \qquad \mbox{ on } \Om
\end{equation}
and set
\[
\sigma_n = \int_{I_+}W_n(\alpha,x) e^{\alpha  u_n}\calP(d\alpha).
\]
We weaken the assumptions in \cite{ORS} by assuming uniform boundedness of
\[
\|W_n(\alpha,x)\|_{L^p(D;L^1(I_+))}
\]
with respect to $n$. We recall that $\|W_n(\alpha,x)\|_{L^\infty(D;L^1(I_+))}\le C$
was assumed in \cite{ORS}.
\begin{lem}
\label{lem:bmonmflds}
Let $\{u_n\}$ be a solution sequence to \eqref{20} where
\[\left \|\int_{I_+}W_n(\alpha,x)\calP (d\alpha) \right \|_{L^p} \leqslant C,
 \]
$W_n(\alpha,x)\ge0$,
$\|f_n\|_{\infty } \leqslant C$ and $\|u_n^+\|_1 \leqslant C$.
Suppose that $\sigma_n {\stackrel{*}{\rightharpoonup}}\sigma$ and $\sigma(\{x_0\}) < 4\pi/p'$ for some $ x_0 \in \Om$.
Then, there exists a neighborhood $\widetilde U\subset \Om $ of $x_0$ such that
\[
\limsup_{n\rightarrow \infty } \|u_n^+\|_{L^\infty(\widetilde U)}<+\infty.
\]
\end{lem}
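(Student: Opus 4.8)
The plan is to localize around $x_0$, pass to isothermal coordinates so that the Laplace--Beltrami operator becomes a scalar multiple of the Euclidean Laplacian, absorb the bounded inhomogeneity $f_n$ into a uniformly bounded correction, and then invoke Lemma~\ref{lem:bm}. This is the manifold counterpart of the argument in \cite{ORS}, with Lemma~\ref{lem:bm} --- which only requires the weaker $L^p$-type bound on $W_n$ --- playing the role of the Euclidean statement used there.

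First I would fix the ball on which to work. Since $\sigma$ is a finite Borel measure on the compact manifold $\Om$ and $\bigcap_{r>0}\overline{B_r(x_0)}=\{x_0\}$, continuity from above gives a radius $r>0$ with $\sigma(\overline{B_r(x_0)})<4\pi/p'$; I would fix $\ee_0$ with $\sigma(\overline{B_r(x_0)})<\ee_0<4\pi/p'$ and use the weak-$*$ convergence of $\sigma_n$ (so that $\limsup_n\sigma_n(K)\le\sigma(K)$ for compact $K$) to get $\sigma_n(\overline{B_r(x_0)})<\ee_0$ for all large $n$. Next, choosing isothermal coordinates $\psi\colon B_r(x_0)\to D\subset\Rd$ with $g=e^{\phi}(dx_1^2+dx_2^2)$ for some $\phi\in C^\infty(\overline D)$, one has $\Delta_g u=e^{-\phi}\Delta u$, and \eqref{20} pulls back to
\[
-\Delta\tilde u_n=e^{\phi}\Bigl(\int_{I_+}\tilde W_n(\alpha,x)\,e^{\alpha\tilde u_n}\,\calP(d\alpha)+\tilde f_n\Bigr)\quad\text{in }D,
\]
with tildes denoting pullbacks and $\Delta$ now Euclidean. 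I would then let $h_n$ solve $-\Delta h_n=e^{\phi}\tilde f_n$ in $D$ with $h_n=0$ on $\partial D$; since $\|e^{\phi}\tilde f_n\|_{L^\infty(D)}\le C$, elliptic estimates give $\|h_n\|_{L^\infty(D)}\le C$. Setting $w_n=\tilde u_n-h_n$ and $\hat W_n(\alpha,x)=e^{\phi(x)}e^{\alpha h_n(x)}\tilde W_n(\alpha,x)\ge0$, the equation becomes $-\Delta w_n=\int_{I_+}\hat W_n(\alpha,x)e^{\alpha w_n}\,\calP(d\alpha)$ in $D$.

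The next step is to verify the hypotheses of Lemma~\ref{lem:bm} for $\{w_n\}$. Because $\alpha\in I_+=[0,1]$ and $\phi,h_n$ are uniformly bounded on $\overline D$, the factor $e^{\phi}e^{\alpha h_n}$ lies between two fixed positive constants, so $\|\int_{I_+}\hat W_n\,\calP(d\alpha)\|_{L^p(D)}\le C$; and since the Jacobian of $\psi$ is bounded above and below, $\|w_n^+\|_{L^1(D)}\le C$ follows from $\|u_n^+\|_{L^1(\Om)}\le C$ together with $\|h_n\|_{L^\infty(D)}\le C$. Crucially, the Riemannian volume element in these coordinates is $e^{\phi}\,dx$, whence
\[
\iint_{D\times I_+}\hat W_n(\alpha,x)e^{\alpha w_n}\,\calP(d\alpha)\,dx
=\iint_{D\times I_+}e^{\phi}\,\tilde W_n(\alpha,x)e^{\alpha\tilde u_n}\,\calP(d\alpha)\,dx
=\sigma_n\bigl(B_r(x_0)\bigr)<\ee_0<\frac{4\pi}{p'}
\]
for all large $n$, so the smallness condition \eqref{bme2} holds. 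Lemma~\ref{lem:bm} then gives $\{w_n^+\}$ bounded in $L^\infty_{\loc}(D)$, in particular on a neighborhood $U'$ of $\psi(x_0)$; since $\tilde u_n^+\le w_n^++\|h_n\|_{L^\infty(D)}$, transporting back yields the asserted bound on $\widetilde U=\psi^{-1}(U')$.

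I expect the only genuinely delicate point to be the conformal change of variables: one must check that the corrected weight $\hat W_n$ still obeys the $L^p(D)$ bound and, above all, that the total integral controlling \eqref{bme2} is left unchanged. The latter is precisely the compensation between the conformal factor multiplying the weight in the equation and the one appearing in the volume element, and it also uses $\alpha\ge0$ on $I_+$ to keep $e^{\alpha h_n}$ uniformly bounded.
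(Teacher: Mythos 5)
Your proposal is correct and follows essentially the same route as the paper: pass to isothermal coordinates, absorb $f_n$ into a bounded harmonic correction $h_n$, set $w_n=\tilde u_n-h_n$, and apply Lemma~\ref{lem:bm} to the modified weight. You even fix a small slip in the paper's write-up (the correction factor is $e^{\alpha h_n}$, not $e^{h_n}$), and you spell out the localization of the mass via weak-$*$ upper semicontinuity on compacts, which the paper leaves implicit.
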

\begin{proof}
Let $(U,\psi)$ a local isothermal chart such that $\psi({x_0})=0$,
$g=e^{\xi(X)}(dX_1^2+dX_2^2)$.
Then, $u_n(X)=u_n(\psi^{-1}(X))$ satisfies
\[
-\Delta_Xu_n=\left(\int_{I_+}W(\alpha,n)e^{\alpha u_n}\,\Pda+f_n\right)e^{\xi}
\qquad\mbox{in\ }D=\psi(U).
\]
Let $h_n$ be defined by
\begin{align*}
&-\Delta h_n=f_n e^{\xi}\ \mbox{in }D,
&&h_n=0\ \mbox{on\ }\de D.
\end{align*}
Then, $\|h_n\|_{L^\infty(D)}\le C$ and $\widetilde u_n=u_n-h_n$
satisfies
\[
-\Delta\widetilde u_n=e^\xi\int_{I_+}W(\alpha,x)e^{h_n}e^{\alpha\widetilde u_n}\,\Pda
\qquad\mbox{in\ }D.
\]
On the other hand, setting $\widetilde W_n(\alpha,x)=e^\xi W(\alpha,x)e^{h_n}$
we have
\begin{align*}
\|\widetilde W_n(\alpha,x)\|_{L^p(D;L^1(I_+))}
\le\|e^\xi e^{h_n}\|_\infty\|W_n(\alpha,x)\|_{L^p(D;L^1(I_+))}\le C.
\end{align*}
Moreover,
\begin{align*}
\|\widetilde u_n^+\|_{L^1(D)}\le\|u_n^+\|_{L^1(\Om)}+|D|\|h_n\|_{L^\infty(D)}\le C
\end{align*}
and
\[
\int_De^\xi\int_{I_+W(\alpha,x)e^{\alpha h_n}}e^{\alpha\widetilde u_n}\,\Pda dX=\sigma_n(U).
\]
In view of the assumption, there exists $U'\subset U$, $x_0\in U'$ such that
\[
\iint_{I_+\times U'}W(\alpha,x)e^{\alpha u_n}\,\Pda dx\le\ee_0<\frac{4\pi}{p'}.
\]
In view of Lemma~\ref{lem:bm}, $\widetilde u_n$ is bounded in $L^\infty_{\mathrm{loc}}(\psi^{-1}(U'))$.
Taking $\widetilde U\Subset U'$ we conclude the proof.
\end{proof}
We can now prove our first result.
\begin{proof}[Proof of Theorem~\ref{thm:firstblowup}]
We denote by $G = G(x, y)$ the Green's function associated to $-\Delta$ on $\Om$. Namely, $G$ is defined by
\begin{equation}\label{green}
\left\{
\begin{array}{ll}
-\Delta_x G(x,y)= \delta_y -\frac{1}{|\Om|}& \\
&\\
\int_\Om G(x, y)dx = 0.\\
\end{array}
\right.
\end{equation}
\bigskip
For every solution $v_n$ to \eqref{genmfn} we define
\[
 u_{\pm,n}(x)= G\star \nu_{\pm,n} (x) = \int_\Om G(x,y) \nu_{\pm,n}(y)dy
 \]
where $\nu_{\pm,n}$ is defined in \eqref{nin}. Then, $v_n = u_{+,n}- u_{-,n}$.
We observe that $u_{\pm,n}$ is uniformly bounded below.
Indeed, let $A>0$ be such that $G(x,y)\ge-A$ for all $x,y\in\Om$.
Then,
\begin{align*}
u_{\pm,n}(x)=\int_\Om G(x,y)\nu_{\pm,n}(y)\,dy\ge -A\int_\Om\nu_{\pm,n}(y)\,dy
\ge-AC_2\lambda_n\ge-AC_2(\lambda_0+1).
\end{align*}
In this sense, we say that $u_{+,n}$ is the ``positive part'' of $v_n$
and $u_{-,n}$ is the ``negative part'' of $v_n$.
Furthermore, in view of Assumption~(V1), the functions $u_{\pm,n}$ satisfy the Liouville system:
\begin{equation}
\label{Liouvillepm}
\left\{
\begin{split}
&-\Delta  u_{\pm,n}=\lambda_n \int_{I_\pm}|V(\alpha,x,v_n)|e^{|\alpha|(u_{\pm,n}-u_{\mp,n})}\,\Pda-c_{\pm,n}  \\
&\int_\Om  u_{\pm,n}\,dx=0
\end{split}
\right.
\end{equation}
where
\[
c_{\pm,n}=\frac{\lambda_n}{|\Om|}\iint_{I_\pm\times\Om}|V(\alpha,x,v_n)|e^{\alpha v_n}\,\Pda.
\]
We check that the equations in \eqref{Liouvillepm} satisfy the assumptions of Lemma~\ref{lem:bmonmflds} with
$W_n(\alpha,x)=|V(\alpha,v_n(x))|e^{-\alpha u_{-,n}(x)}$, $p=\infty$ and $f_n=c_{+,n}$.
To this end, we note that in view of Assumption~(V2), we have, for every $\alpha\in I_+$:
\begin{equation*}
V(\alpha,x,v_n)e^{-\alpha u_{-,n}}\le V(\alpha,x,v_n)e^{AC_2(\lambda_0+1)}\le C_1e^{AC_2(\lambda_0+1)}.
\end{equation*}
Therefore, setting $W_n(\alpha,x)=|V(\alpha,v_n(x))|e^{-\alpha u_{-,n}(x)}$ we have
\[
0\le W_n(\alpha,x)\le C_1 e^{AC_2(\lambda_0+1)}.
\]
Furthermore,
\begin{align*}
c_{+,n}\le\frac{\lambda_n}{|\Om|}\iint_{I_+\times\Om}|V(\alpha,x,v_n)|e^{u_{+,n}-u_{-,n}}\,\Pda
\le C_2\frac{\lambda_0+1}{|\Om|}.
\end{align*}
\par
Let
\[
\mathcal S_{u_+ }= \{ p\in \Om : \nu_+(\{p \})\geqslant 4\pi\}.
\]
Since $\nu_{+,n}(\Om)= \int_\Om \nu_{+,n}dx \leqslant C_2\lambda_n$ and $\nu_{+,n}(\Om)\rightarrow \nu_+(\Om)$
then $\nu_+(\Om) \leqslant C_2(\lambda_0+1) <\infty$, so that $\sharp\mathcal S_{u_+} <\infty$.
\par
\textit{Claim~1.} If $\mathcal S_{u_+} =\emptyset $, then Alternative~(i) holds.

Indeed, if  $\mathcal S_{u_+} =\emptyset $ holds, then
in view of Lemma~\ref{lem:bmonmflds} with $p=+\infty$ and of the compactness of $\Om$ we have
\[
\limsup_{n\rightarrow +\infty} \|u^+_{+,n}\|_{L^{\infty}(\Om)}<+\infty.
\]
Then, by elliptic estimates,
\[
\limsup_{n\rightarrow \infty}\|u_{+,n}^+\|_{W^{2,r}(\Om)}<+\infty, \qquad r\in[1,+\infty),
\]
and therefore we may extract a subsequence $\{u_{+,n_k}\}$ such that $u_{+,n_k} \rightarrow    u_+$, for some $ u_+\in \mathcal E$. Similarly, if $S_{u_-}= \emptyset$ then there exists a subsequence $u_{-,n_k} \rightarrow    u_-$, for some $ u_-\in \mathcal E$, where
\[
\mathcal S_{u_- }= \{ p\in \Om : \nu_-(\{p \})\geqslant 4\pi\}.
\]
We conclude that if $\mathcal S_{u_+}\cup\mathcal S_{u_-}=\emptyset$,
then $v_n\to v=u_+-u_-$ in $\mathcal E$. Claim~1 is established.
\par
\textit{Claim~2.}
If $\mathcal S_{u_+}\cup\mathcal S_{u_-}\neq\emptyset$, then Alternative~2 holds.
We first assume that $S_{u_+}\neq \emptyset$. In this case, for any $\om \subset\subset \Om\setminus \mathcal{S}_{u_+}$ we have
\[
\limsup_{n\rightarrow +\infty}\|u_{+,n}^+\|_{L^\infty(\om)}<+\infty,
\]
and therefore, there exists $s_+\in L^\infty_{loc}(\Om \setminus \mathcal S_{u_+})$ such that $\nu_{{{+,n}}|_{\om}}\rightarrow s_+$ in $L^p(\om)$ for all $p\in[1,+\infty)$. It follows that $\nu_{{{+}}|_\om}= s_+ dx$, while the singular part of $\nu_+$ is supported on $\mathcal S_{u_+}.$ Hence,
\[
\nu_+= s_+ + \sum_{p\in \mathcal S_{u_+}}n_{+,p}\delta_p
\]
for some $n_{+,p}\geqslant 4\pi.$ Similarly

\[
\nu_-= s_- +\sum_{p\in \mathcal S_{u_-}}n_{-,p}\delta_p
\]
where $n_{-,p} \geqslant 4\pi.$
\par
We are left to show that
\[
\mathcal{S}_{u_+}= \mathcal S_+\qquad \mbox{ and } \qquad \mathcal S_{u_-}= \mathcal S_-.
\]
Let us start by proving that $\mathcal S_+ \subseteq \mathcal S_{u_+}.$ To this aim assume $p_0 \not\in \mathcal S_{u_+}$.
Then, by Lemma~\ref{lem:bmonmflds} there exists a neighborood of $p_0$ $U\subset \Om$ such that
\[
\limsup_{n\rightarrow \infty} \|u_{+,n}^+\|_{L^\infty(U)} < +\infty.
\]
Since $v_n= u_{+,n}- u_{-,n} \leqslant u_{+,n}+ C$, this implies
\[
\limsup_{n\rightarrow \infty} \| v_{n}^+\|_{L^\infty(U)} < +\infty
\]
i.e. $p_0\not \in \mathcal S_+.$
To prove that $\mathcal S_{u_{+}} \subseteq \mathcal S_+$ let $p_0\in \mathcal S_{u_+}$. As already seen $S_{u_+} $ coincides with the singular support of $\nu_+$ and consequently the sequence of functions
\[
\nu_{+,n} = \lambda_n\int_{I_+}V(\alpha,x,v_n)e^{\alpha v_n}\calP(d\alpha)dx
\]
is $L^\infty$-unbounded near $p_0\in \mathcal S_{u_+}$. This implies that, for every $r > 0$
\begin{equation*}
+ \infty = \lim_{n\rightarrow \infty}\sup_{B(p_0,r)} \nu_{+,n}\leqslant \lim_n \sup_{B(p_0,r)}C_1\lambda_n(e^{v_n}+1).
\end{equation*}
In particular
 \[
 \lim_{n\rightarrow \infty}\sup_{B(p_0,r)} v_{n}=+\infty
 \]
so that $p_0\in\mathcal S_+.$ The proof for $\mathcal S_-$ is similar.
\par
In order to prove \eqref{limiteq} we generalize the approach in \cite{os}.
Let $k_n(\alpha,x)=V(\alpha,x,v_n(x))$.
In view of (V2), we have $\|k_n\|_{L^\infty(I\times\Om)}\le C_1$.
Therefore, passing to a subsequence, we may assume that
$k_n$ converges weak-$\ast$ in $L^\infty(I\times\Om)$ to some $k\in L^\infty(I\times\Om)$.
Setting
\[
c_n=\frac{\lambda_n}{|\Om|}\iint_{I\times\Om}V(\alpha,x,v_n)e^{\alpha v_n},
\]
in view of (V3) we may assume that $c_n\to c\in\R$.
On the other hand, since $v_n$ is bounded in $W^{1,q}(\Om)$
for all $q\in[1,2)$, we may also assume that $v_n\to v\in W^{1,q}(\Om)$
strongly in $L^r(\Om)$ for $r\in[1,\infty)$.
We fix $\om\Subset\Om\setminus\mathcal S$ and we take a test function $\varphi\in C^\infty(\om)$.
We have, for all $n$,
\begin{equation}
\int_\Om\nabla v_n\cdot\nabla\varphi
=\lambda_n\iint_{I\times\Om}k_n(\alpha,x)e^{\alpha v_n}\varphi\,\Pda dx-c_n\int_\Om\varphi.
\end{equation}
Taking limits, we obtain
\begin{equation}
\int_\Om\nabla v\cdot\nabla\varphi
=\lambda_0\iint_{I\times\Om}k(\alpha,x)e^{\alpha v}\varphi\,\Pda dx-c_0\int_\Om\varphi.
\end{equation}
Since $\varphi$ is an arbitrary test function supported in $\om$, we conclude that
\eqref{limiteq} holds true in $\om$.
Since $\om\Subset\Om\setminus\mathcal S$ is also arbitrary,
\eqref{limiteq} is established on the whole of $\Om$.
\end{proof}
We proceed towards the proof of Theorem~\ref{thm:secondblowup}.
Hence we assume that depends on $\alpha,v$ only,  $\nabla_xV(\alpha,v)=0$
and that (V2')--(V3') hold.
We denote
\[
\tilde\mu_{\pm,n}(dx)=\lambda_n\int_{I_\pm}\frac{V(\alpha,v_n)}{\alpha}e^{\alpha v_n}\calP(d\alpha) dx.
\]
Since $\tilde \mu _{\pm,n}(\Om)\leqslant C_2'\lambda_n$,
up to subsequences $\tilde \mu_{\pm,n}\stackrel{*}{\rightharpoonup} \tilde \mu _{\pm}$ for some Borel measures $\tilde \mu_\pm\in\mathcal M(\Om).$
We first prove a lemma.
\begin{lem}
There exists $\tilde s_{\pm} \in L^{\infty}_{loc}(\Om\setminus \mathcal S_{\pm})$ and $\tilde m_\pm(p)\geqslant 4\pi$, $p\in \mathcal S_\pm$, such that
\begin{equation}\label{23bis}
\tilde \mu_\pm = \tilde s_\pm + \sum_{p\in\mathcal S_\pm} \tilde m_\pm(p)\delta_p.
\end{equation}
\end{lem}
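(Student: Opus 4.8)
The plan is to repeat, for the measures $\tilde\mu_{\pm,n}$, the analysis already carried out for $\nu_{\pm,n}$ in the proof of Theorem~\ref{thm:firstblowup}, adding only two observations tailored to the new hypotheses (V2')--(V3'): first, that the densities of $\tilde\mu_{\pm,n}$ are locally bounded away from $\mathcal S_\pm$; second, that $\tilde\mu_{\pm,n}$ dominates $\nu_{\pm,n}$, since $|\alpha|\le 1$ on $I$, which is what will produce the lower bound $\tilde m_\pm(p)\ge 4\pi$.

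First I would establish the local $L^\infty$ bound. Recall that $v_n=u_{+,n}-u_{-,n}$ with $u_{\pm,n}\ge -C$, and that $\mathcal S_\pm=\mathcal S_{u_\pm}$. Fix $\om\Subset\Om\setminus\mathcal S_\pm$; the proof of Theorem~\ref{thm:firstblowup} already gives $\limsup_n\|u_{\pm,n}^+\|_{L^\infty(\om)}<+\infty$. A short elementary estimate, using $u_{\mp,n}\ge -C$ and $|\alpha|\le 1$, yields $e^{\alpha v_n}\le e^{u_{\pm,n}^++C}$ on $\Om$ for every $\alpha\in I_\pm$; combined with the pointwise bound $|\alpha^{-1}V(\alpha,v_n)|\le C_1'$ from (V2') and the finiteness of $\calP$, this shows that the density $\lambda_n\int_{I_\pm}\alpha^{-1}V(\alpha,v_n)e^{\alpha v_n}\,\calP(d\alpha)$ of $\tilde\mu_{\pm,n}$ is bounded in $L^\infty(\om)$ uniformly in $n$. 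Then, by weak-$*$ compactness in $L^\infty(\om)$ and an exhaustion of $\Om\setminus\mathcal S_\pm$ by such sets $\om$ --- exactly as was done for $\nu_\pm$ --- one obtains a nonnegative $\tilde s_\pm\in L^\infty_{loc}(\Om\setminus\mathcal S_\pm)$ such that the restriction of $\tilde\mu_\pm$ to $\Om\setminus\mathcal S_\pm$ equals $\tilde s_\pm\,dx$. Since $\mathcal S_\pm$ is finite, the singular part of $\tilde\mu_\pm$ is a finite sum of Dirac masses carried by $\mathcal S_\pm$, and one arrives at \eqref{23bis} with coefficients $\tilde m_\pm(p)\ge 0$.

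It remains to prove $\tilde m_\pm(p)\ge 4\pi$ for $p\in\mathcal S_\pm$, and here I would compare $\tilde\mu_{\pm,n}$ with $\nu_{\pm,n}$ from \eqref{nin}. By (V1), on $I_\pm$ one has $\alpha^{-1}V(\alpha,v_n)=|\alpha|^{-1}|V(\alpha,v_n)|\ge |V(\alpha,v_n)|$ because $|\alpha|\le 1$; integrating against $\calP$ over $I_\pm$ gives $\tilde\mu_{\pm,n}\ge\nu_{\pm,n}$ as nonnegative measures, and since weak-$*$ convergence of measures preserves this order relation, $\tilde\mu_\pm\ge\nu_\pm$. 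Evaluating at a point $p\in\mathcal S_\pm$ and invoking Theorem~\ref{thm:firstblowup}, $\tilde m_\pm(p)=\tilde\mu_\pm(\{p\})\ge\nu_\pm(\{p\})=n_{\pm,p}\ge 4\pi$, which completes the argument. I do not foresee a genuine difficulty here; the only slightly delicate points are the exhaustion/diagonal argument ensuring that the locally defined weak-$*$ limits glue into a single $\tilde s_\pm\in L^\infty_{loc}(\Om\setminus\mathcal S_\pm)$, and the (standard) fact that the inequality $\tilde\mu_{\pm,n}\ge\nu_{\pm,n}$ passes to the weak-$*$ limit.
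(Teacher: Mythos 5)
Your proof is correct and follows essentially the same approach as the paper: establish a local $L^\infty$ bound on the densities of $\tilde\mu_{\pm,n}$ away from $\mathcal S_\pm$ to show the singular part is supported on $\mathcal S_\pm$, then use the domination $\tilde\mu_{\pm,n}\geq\nu_{\pm,n}$ (a consequence of (V1) and $|\alpha|\le 1$) together with the conclusions of Theorem~\ref{thm:firstblowup} to obtain $\tilde m_\pm(p)\geq n_{\pm,p}\geq 4\pi$. Your version is somewhat more careful than the paper's in the first step, passing through $v_n=u_{+,n}-u_{-,n}$ rather than a direct bound $\sup_\om|v_n|\le C$, but the two arguments are the same in substance.
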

\begin{proof}
By definition of $\mathcal S_\pm$, for every $\om \subset\subset \Om\setminus \mathcal S_\pm$ there exists a positive constant $C = C (\om)$ such that
\[
\sup_\om|v_n|\leqslant C\qquad \qquad \mbox{ for any }n\in \mathbb N.
\]
It follows that, for any measurable set $E \subset  \om$
\[
\tilde\mu_{\pm,n}(E)=\lambda_n\iint_{I_\pm\times E} \frac{V(\alpha,v_n)}{\alpha}e^{\alpha v_n }\calP(d\alpha)
\leqslant C_1'\lambda_ne^C |E|.
\]
Hence, the singular parts of $\tilde \mu_\pm$ are contained in $\mathcal S_\pm$ so that \eqref{23bis} holds for some $\tilde s_\pm\in L^1 (\Om )\cap L^\infty_{loc}(\Om\setminus \mathcal S_\pm)$ and for some  $\tilde m_\pm ( p ) \geqslant 0$, $p \in \mathcal S_\pm$. On the other hand, since $\tilde \mu_{\pm,n}\geqslant \nu_{\pm,n}$, then $\tilde m_\pm (p) =\tilde \mu_\pm(\{p\})\geqslant \nu_+ (\{p\})\geqslant 4\pi$. This completes our proof.
\end{proof}
Let $\mu_n$ and $\mu$ be as in formulas \eqref{mu-n} and \eqref{mu}. We are in position to prove Part~(i) of Theorem~\ref{thm:secondblowup}:
\begin{proof}[Proof of Theorem~\ref{thm:secondblowup}]
Part (i).
To prove that there exists $\zeta_p \in \mathcal M(I)$ and $r \in L^1 (I\times \Om),$ $r\geqslant 0$ such that
\[
\mu(d\alpha dx)= \sum_{p\in\mathcal S}\zeta_p(d\alpha) \delta_p (dx) + r(\alpha,x ) \calP(d\alpha) dx,
\]
it suffices to show that the singular part of $\mu$ is supported on $I\times\mathcal S$. To this aim, let us take
$A\Subset I\times (\Om\setminus \mathcal S )$. Then, there exists a constant $C = C(A)$
such that $\|\alpha v_n\|_{L^\infty(A)}\leqslant C$. Hence, for large values of $n$ we
obtain
\[
\mu_n(A)=\lambda_n\iint_{A}\frac{V(\alpha,v_n)}{\alpha}e^{\alpha v_n}\leqslant C_1'(\lambda_0 +1 )e^C\iint_A\Pda dx
\]
so that, on $A$, $\mu_n$ is absolutely continuous. This implies that $\mu_n$ does not have singularities on
$A\Subset I \times (\Om\setminus \mathcal S ) $ so that the thesis follows.
\par
Part (ii).
We recall from Section~\ref{sec:results} that
\[
\mu_\alpha = \lambda \frac{V(\alpha,v)}{\alpha} e^{\alpha v}\,dx.
\]
We define
\[
u_\alpha(x)= G\star \mu_\alpha(x)=\int_\Om G(x,x')\mu_\alpha(x')dx',
\]
where $G$ is the Green's function defined by \eqref{green}. Then,
\[
v =\int_I\alpha u_\alpha \calP(d\alpha)
\]
and $(u_\alpha)_{\alpha\in I}$ satisfies the Liouville type system:
\begin{align*}
&-\Delta u_\alpha=\lambda\frac{V(\alpha,v)}{\alpha}\exp\left\{\alpha\int_I\alpha'u_{\alpha'}\calP(d\alpha')\right\}-c_{\alpha}
&&\int_\Om u_\alpha=0,
\end{align*}
where
\[
c_{\alpha}=\frac{\lambda}{|\Om|}\int_\Om\frac{V(\alpha,v)}{\alpha}\exp\left\{\alpha\int_I\alpha'u_{\alpha'}\calP (d\alpha')\right\}.
\]
Now we use Suzuki's symmetry argument as introduced in \cite{ss,os2,sbook}.
Let us first observe that $\mu_\alpha$ verifies:
\begin{equation}
\label{nablamua}
\begin{split}
\nabla \mu_\alpha = \alpha\mu_\alpha\nabla v
=\alpha \mu_\alpha \int_I \alpha' \nabla u_{\alpha'}\calP(d\alpha')=\alpha \mu_\alpha \int_I \alpha' (\nabla G) \star \mu_{\alpha'}\calP(d\alpha').
\end{split}
\end{equation}
We note that, despite of the general form of the potential $V$,
equation~\eqref{nablamua} is identical to equation~(30) in \cite{ORS}.
Equation~\eqref{nablamua} and with Part~(i) in Theorem~\ref{thm:secondblowup} are key ingredients
necessary to the above mentioned symmetry argument.
With such ingredients at hand, the proof of Part~(ii) follows exactly as in \cite{ORS}.
For the reader's convenience, we sketch it briefly in what follows.
\par
Let $\chi $  be a $C^1$-vector field over $\Om$, and define
\[
\rho_\chi  : \Om^2 \setminus \{ (x,x')\in \Om^2: x=x'\} \rightarrow \mathbb R
\]
by
\[
\rho_\chi(x,x')=\frac12 [\chi(x)\cdot \nabla_x G(x,x') + \chi (x')\cdot  \nabla_{\chi'} G(x,x')].
\]
Recall from Section~\ref{sec:results} that
\[
\mu_\alpha^n = \lambda_n \frac{V(\alpha,v_n)}{\alpha} e^{\alpha v_n}\,dx.
\]
Then, Suzuki's symmetry trick yields the following key identity:
\begin{equation*}
\iint_{I\times\Om} (\dive \chi)\mu^n_\alpha \calP(d\alpha)dx =
-\iint_{I^2}  \alpha \alpha' \calP(d\alpha)\calP(d\alpha')\int \int_{\Om^2} \rho_\chi (x,x')\mu^n_\alpha\mu^n_{\alpha'} dxdx',
\end{equation*}
For any choice of $\chi$ such that $\rho_\chi$ is continuous on $\Om^2$, taking limits in last equality,
in view of Part (i) we obtain the identity:
\begin{equation}
\label{34}
\begin{split}
\sum_{p\in\mathcal S} &\int_{I} (\dive \chi)(p) \zeta_p (d\alpha) + \iint_{I\times \Om }  (\dive \chi)(x)r (\alpha,x)\calP(d\alpha)dx \\
& = \iint_{I^2} \big[\sum_{p,q \in \mathcal S} \zeta_p (d\alpha)\zeta_q (d\alpha')\rho_\chi(p,q)
 + \sum_{p\in \mathcal S} \zeta_p(d\alpha)  \calP(d\alpha')\int_\Om r(\alpha',x')\rho_\chi(p,x') dx' \\
& +  \sum_{q\in \mathcal S} \zeta_q(d\alpha')  \calP(d\alpha) \int_\Om r(\alpha,x) \rho_\chi(x,q) dx
+ \iint_{\Om^2} r(\alpha,x)r(\alpha',x') \rho_\chi(x,x') dx dx' \calP(d\alpha) \calP(d\alpha')\big].
\end{split}
\end{equation}
We fix $p_0\in \mathcal S$
and take an isothermal coordinate chart $(\psi, U)$ satisfying $\psi(p_0) = 0,$ $g(X) = e^\xi (dX^2_1 + dX_2^2)$, and
$\xi(0) = 0$. Let $B(p_0, 2r)\subset U$ and $B(p_0, 2r) \cup\mathcal S = \{p_0\}$.
We recall the following expansions of the Green's function:
\[
G(X, X')=-\frac{1}{2\pi}\log|X-X'| + \om (X, X'),
\]
\[
\nabla_X G(X,X')= -\frac{1}{2\pi} \frac{X-X'}{|X-X'|^2} +\nabla_X \om (X, X'),
\]
\[
\nabla_{X'} G(X,X')= \frac{1}{2\pi} \frac{X-X'}{|X-X'|^2} +\nabla_{X'} \om (X, X'),
\]
with $\om$ satisfying

\[
\| \om \|_{L^\infty(B(p_0,2r)^2)} +\| \nabla_X \om\|_{L^\infty(B(p_0,2r)^2)} + \| \nabla_{X'}\om \|_{L^\infty(B(p_0,2r)^2)}  = O(1)
\]
as $r\rightarrow 0$. Let $\fip\in C(\Om)$ be a cut-off function such that $\fip \equiv 1$ in $B(p_0, r)$ and $\fip\equiv 0$ in $\Om\setminus  B(p_0, 2r)$.
We choose $\chi (X) = 2X\fip $. With this choice of $\chi$ we may write:
\[
\rho_\chi(X,X')= ( -\frac{1}{2\pi} + \eta) \fip
\]
where $\eta (X, X')$ is a continuous function on $\Om^2$. Moreover, we have
\[
\dive \chi (X) = |g|^{-1/2}\de X_j (|g|^{1/2}(\chi)^j )  = 4+ O(X).
\]
Consequently, taking limits for each term in \eqref{34} as $r\downarrow 0$ we derive:
\begin{align*}
&\sum_{p\in\mathcal S}\int_{I}(\dive\chi)(p)\zeta_{p_0}(d\alpha)\rightarrow 4\int_I \zeta_{p_0}(d\alpha);\\
&\left|\iint_{I\times \Om } (\dive \chi)(x)r (\alpha,x)\calP(d\alpha)dx \right|=o(1);\\
&\iint_{I^2}\sum_{p,q\in \mathcal S}\rho_\chi(p,q)\zeta_p(d\alpha)\zeta_q(d\alpha')\rightarrow -\frac{1}{2\pi}\iint_{I^2}\zeta_{p_0}(d\alpha)\zeta_{p_0}(d\alpha');\\
&\iint_{I^2}\sum_{p\in \mathcal S}\zeta_p (d\alpha) \int_\Om r(\alpha',x')\rho_\chi(p,x')dx' \calP(d\alpha')= o(1).
\end{align*}
Similarly, we have:
\begin{align*}
&\iint_{I^2}  \sum_{q\in \mathcal S}  \zeta_q(d\alpha') \int_\Om r(\alpha,x) \rho_\chi(x,q)\calP(d\alpha)=o(1)\\
&\iint_{I^2}\iint_{\Om^2} r(\alpha,x)r(\alpha',x') \rho_\chi(x,x') dx dx'  \calP(d\alpha) \calP(d\alpha')=o(1).
\end{align*}
Inserting into \eqref{34} we conclude the proof of Part~(ii).
%
\par
Part~(iii).
We provide the proof for the $``I_+ -$case", the proof for $I_-$ being exactly the same. Let $\ee > 0$  and let $\fip \in C(\Om)$, $\psi \in C(I)$, $0\leqslant\psi(\alpha)\leqslant 1$, $\psi \equiv 1$ on $I_+$, $\psi \equiv 0$ on $[-1, -\ee]$. We have
\begin{equation}\label{equality}
\begin{split}
\iint_{I\times \Om} |\alpha| \fip(x)\psi(\alpha)
\mu_n(d\alpha dx)
&= \int_\Om \fip(x)\nu_{+,n}(dx) \\
&+ \lambda_n\iint_{[-\ee,0]\times\Om}V(\alpha,v_n)\psi(\alpha)\varphi(x)\,dx\calP(d\alpha).\\
\end{split}
\end{equation}
Taking limits on the left-hand side of  \eqref{equality} as $n\rightarrow\infty$, by \eqref{zita} we have
\begin{equation*}
\begin{split}
&\iint_{I\times \Om} |\alpha| \fip(x)\psi(\alpha)
\mu_n(d\alpha dx)\\
&\rightarrow \sum_{p\in \mathcal S} \int_I |\alpha|\psi (\alpha) \fip(p) \zeta _p(d\alpha)  + \iint_{I\times\Om}|\alpha|\fip(x)\psi(\alpha) r(\alpha,x) \calP(d\alpha) dx.
\end{split}
\end{equation*}
Moreover,
\[
 \int_I |\alpha|\psi (\alpha)\zeta_p(d\alpha)=\int_{I_+}|\alpha| \zeta _p(d\alpha)+ \int_{[-\ee,0]}|\alpha|\psi (\alpha) \zeta _p(d\alpha),
\]
and
\begin{equation*}
\begin{split}
&\iint_{I\times\Om}|\alpha|\fip(x)\psi(\alpha) r(\alpha,x) \calP(d\alpha) dx\\
& = \iint_{I_+\times\Om}|\alpha|\fip(x)  r(\alpha,x) \calP(d\alpha) dx+ \iint_{[-\ee,0]\times\Om}|\alpha|\fip(x)\psi(\alpha) r(\alpha,x) \calP(d\alpha) dx.
\end{split}
\end{equation*}
We note that
\[
0\leqslant \int_{[-\ee,0]}|\alpha|\psi (\alpha) \zeta _p(d\alpha)
\leqslant\ee\int_{[-\ee,0]}\psi(\alpha)\zeta_p\le c_1\ee.
\]
Furthermore,
\[
\left|\iint_{[-\ee,0]\times\Om}|\alpha|\fip(x)\psi(\alpha) r(\alpha,x) \calP(d\alpha) dx\right|\leqslant \ee\|\fip\|_{\infty}\iint_{I\times\Om}r(\alpha,x)\calP(d\alpha) dx.
\]
Analogously, by passing to the limit as $n\rightarrow\infty$ on the right hand side of  \eqref{equality} we have
\[
\int_\Om \fip(x)\nu_{+,n}(dx) \rightarrow \sum_{p\in \mathcal S_+} n_{+,p} \fip(p)+\int_\Om s_+\fip.
\]
Moreover, in view of (V2'),
\[
\lambda_n \int_{[-\ee,0]}|\alpha| \psi(\alpha)
\int_{\Om}\frac{|V(\alpha,v_n)|}{|\alpha|}\fip(x)e^{\alpha v_n}\,dx \calP(d\alpha)
\leqslant C_2'\lambda_n\ee \|\fip\|_\infty.
\]
Hence, combining the estimates above we obtain
\begin{equation*}
\begin{split}
&\sum_{p\in \mathcal S_+} n_{+,p} \fip(p)+\int_\Om s_+\fip + c_1 \ee \|\fip\|_\infty \\
&=  \sum_{p\in \mathcal S} \int_{I_+} |\alpha| \zeta _p(d\alpha) \fip(p) + \iint_{I_+\times\Om}|\alpha|\fip(x) r(\alpha,x) \calP(d\alpha) dx+ c_2 \ee \|\fip\|_\infty,
\end{split}
\end{equation*}
where $c_1$ and $c_2$ are constants uniformly bounded with respect to $\ee$. So, by passing to the limit as $\ee \rightarrow 0^+ $ in last equality, we obtain
\begin{equation}\label{25bis}
\sum_{p\in \mathcal S_+} n_{+,p} \fip(p)+\int_\Om s_+\fip = \sum_{p\in \mathcal S} \int_{I_+} |\alpha| \zeta _p(d\alpha) \fip(p) + \iint_{I_+\times\Om}|\alpha|\fip(x) r(\alpha,x) \calP(d\alpha) dx.
\end{equation}
Now, assume $\fip\in C (\Om)$ with supp$\fip \subset \Om\setminus \mathcal S.$ Then,
\[
\int_\Om s_+\fip = \int_\Om \fip(x) \int_{I_+} |\alpha|r(\alpha,x) \calP(d\alpha) dx
\]
so that for almost every $x\in\Om$
\[
s_+ =  \int_{I_+} |\alpha|r(\alpha,x) \calP(d\alpha)
\]
since $\mathcal S$ is null set with respect to $dx$.  By \eqref{25bis} this implies
\begin{equation}\label{26bis}
\sum_{p\in \mathcal S_+} n_{+,p} \fip(p) = \sum_{p\in \mathcal S} \int_{I_+} |\alpha| \zeta _p(d\alpha) \fip(p).
\end{equation}
Now let us  fix $p_0\in \mathcal S_+$ and let $\fip\in C(\Om)$ be such  that supp$\fip\subset B_\rho(p_0)$, with $B_\rho(p_0) \cap \mathcal S =\{p_0\}$ and verifying $\fip(p_0) = 1$. By \eqref{26bis} then we have

\[
n_{+,p_0} = \int_{I_+}|\alpha| \zeta _{p_0}(d\alpha)
\]
for any $p_0 \in \mathcal S_+$. To conclude, for $p_0 \in\mathcal S_-\setminus \mathcal S_+$, let us assume $\fip\in C(\Om)$ as above. By \eqref{26bis} we get
\[
\int_{I_+}|\alpha| \zeta _{p_0}(d\alpha)=0.
\]
This completes our proof.
\end{proof}
\section{The cases of physical interest}
\label{sec:neri}
In this section we consider the special cases of \eqref{genmf} which are of
interest in statistical turbulence.
Namely, we consider the special case $V(\alpha,x,v)=V_1(\alpha,v)$, where $V_1$ is given by
\eqref{VSS}, corresponding to Sawada and Suzuki's equation \eqref{SawadaSuzuki},
and the special case $V(\alpha,x,v)=V_2(\alpha,v)$, where $V_2$ is given by \eqref{neripotential},
corresponding to Neri's equation \eqref{NE}.
\par
It is clear that in both cases $V_1,V_2$ satisfy (V0)--(V1).
We claim that (V2')--(V3') are also satisfied.
Indeed, by Jensen's inequality we have
\[
\int_\Om e^{\alpha v}\ge|\Om|
\]
for all $v\in\mathcal E$ and for all $\alpha\in I$.
Therefore, we have
\begin{align*}
0\le\alpha^{-1}V_1(\alpha,v)=&\frac{1}{\int_\Om e^{\alpha v}}\le\frac{1}{|\Om|}\\
0\le\alpha^{-1}V_2(\alpha,v)=&\frac{1}{\iint_{I\times\Om} e^{\alpha v}}\le\frac{1}{|\Om|},
\end{align*}
where we used $\calP(I)=1$ in the last inequality. Hence (V2') is satisfied with $C_1'=|\Om|^{-1}$
in both cases.
On the other hand, we have
\begin{align*}
\iint_{I\times\Om}|V_1(\alpha,v)|e^{\alpha v}\,dx\Pda
=&\iint_{I\times\Om}\frac{|\alpha|e^{\alpha v}}{\int_\Om e^{\alpha v}}\,dx\Pda
=\int_I|\alpha|\Pda\le1\\
\iint_{I\times\Om}|V_2(\alpha,v)|e^{\alpha v}\,dx\Pda
=&\iint_{I\times\Om}\frac{|\alpha|e^{\alpha v}}{\iint_{I\times\Om}e^{\alpha v}}\,dx\Pda\le1.
=\int_I|\alpha|\Pda\le1
\end{align*}
Hence, (V3') is also satisfied in both special cases with $C_2'=1$.
We conclude that Theorem~\ref{thm:firstblowup} and Theorem~\ref{thm:secondblowup}
hold true for solution sequences to \eqref{SawadaSuzuki} and \eqref{NE}.
In other words, \eqref{SawadaSuzuki} and \eqref{NE} are similar from the point of view
of blow-up.
\par
If $k\equiv0$ in \eqref{limiteq} we say that residual vanishing occurs.
In the following theorem we provide a sufficient condition for residual vanishing,
in the special case where $V=V_2$ has the form \eqref{neripotential}.
The proof is an adaptation of an argument from \cite{os3} to our case.
\begin{thm}
\label{thm:residual}
If $\mathrm{supp}\calP\cap\{-1,1\}\neq0$ and if there exists
$p\in \mathcal S_+\setminus \mathcal S_-$ such that $n_{+,p}>4\pi$ then $k\equiv0$ in \eqref{limiteq}.
\end{thm}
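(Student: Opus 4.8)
The plan is to argue by contradiction, exploiting the explicit form of the potential. Since $V=V_2$ is given by \eqref{neripotential}, the coefficient in \eqref{limiteq} is $k_n(\alpha,x)=V_2(\alpha,v_n)=\alpha/Z_n$, where $Z_n:=\iint_{I\times\Om}e^{\alpha v_n}\,\Pda\,dx\ge|\Om|$ by Jensen's inequality. As $k_n$ is $\alpha$ times the scalar $1/Z_n$ and $1/Z_n$ is bounded, along the subsequence for which $k_n$ converges weak-$*$ in $L^\infty(I\times\Om)$ we necessarily have $1/Z_n\to\ell$ for some $\ell\ge0$ and $k(\alpha,x)=\ell\alpha$. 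Hence $k\equiv0$ is equivalent to $\ell=0$, and it suffices to rule out $\ell>0$. So I would suppose, for contradiction, that $\ell>0$, i.e.\ $Z_n\to Z_0:=1/\ell\in(0,\infty)$; and we may assume $1\in\mathrm{supp}\,\calP$ (the case $-1\in\mathrm{supp}\,\calP$ being symmetric after replacing $v_n$ by $-v_n$). The contradiction will come from comparing two facts: $\ell>0$ forces $e^{\alpha v}\in L^1(I\times\Om)$, whereas the strong blow-up at $p$ together with mass of $\calP$ near $\alpha=1$ makes $e^{\alpha v}$ non-integrable near $p$.

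The first step is to identify the regular part $r$ of $\mu$. Away from $\mathcal S$ elliptic estimates give $v_n\to v$ locally uniformly, hence $\frac{\lambda_n}{Z_n}e^{\alpha v_n}\to\frac{\lambda_0}{Z_0}e^{\alpha v}$ locally uniformly on $I\times(\Om\setminus\mathcal S)$; since by Theorem~\ref{thm:secondblowup}(i) the singular part of $\mu$ is carried by $I\times\mathcal S$, this yields $r(\alpha,x)=\frac{\lambda_0}{Z_0}e^{\alpha v}$ for a.e.\ $(\alpha,x)$. As $r\in L^1(I\times\Om)$ by the same theorem, I conclude $\iint_{I\times\Om}e^{\alpha v}\,\Pda\,dx<\infty$.

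The main step is a pointwise lower bound for $v$ near $p$, namely $v(x)\ge\frac{n_{+,p}}{2\pi}\log\frac{1}{|x-p|}-C$ on a small ball $B_\rho(p)$. To obtain it I would choose $\rho$ so small that $B_\rho(p)\cap\mathcal S=\{p\}$; since $p\notin\mathcal S_-$, equation \eqref{limiteq} reduces in $B_\rho(p)$ to $-\Delta v=h+n_{+,p}\delta_p$ with $h=\lambda_0\int_I k(\alpha,x)e^{\alpha v}\,\Pda-c_0$, the coefficient of $\delta_p$ being exactly $n_{+,p}$. Writing $v=u_+-u_-$ as in the proof of Theorem~\ref{thm:firstblowup}, one has $u_+\ge -C$ everywhere and $u_-$ bounded on $B_\rho(p)$ (since $p\notin\mathcal S_-$), so $v$ is bounded below on $B_\rho(p)$, and because $k(\alpha,x)=\ell\alpha$ the function $h$ is then bounded below there as well. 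Consequently $\widetilde w:=v-n_{+,p}\,G(\cdot,p)$ satisfies $\Delta\widetilde w\le C$ in $B_\rho(p)$ and $\widetilde w\in L^1_{\mathrm{loc}}$ (as $v\in W^{1,q}(\Om)$, $q<2$), hence $\widetilde w$ coincides a.e.\ with a superharmonic function up to a bounded correction and is therefore bounded below on $B_{\rho/2}(p)$; the claimed estimate follows from the expansion $G(x,p)=\frac1{2\pi}\log\frac1{|x-p|}+\om(x,p)$ with $\om$ bounded near $p$. This potential-theoretic lower bound is the step I expect to require the most care, since a priori the nonlinear term in \eqref{limiteq} is only known to lie in $L^1_{\mathrm{loc}}$ near $p$.

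Finally, since $n_{+,p}>4\pi$ I can fix $\delta>0$ with $1-\delta>4\pi/n_{+,p}$, and since $1\in\mathrm{supp}\,\calP$ we have $\calP([1-\delta,1])>0$. For each $\alpha\in[1-\delta,1]$ one has $\alpha n_{+,p}/(2\pi)>2$, so the lower bound on $v$ gives $\int_{B_{\rho/2}(p)}e^{\alpha v}\,dx\ge e^{-\alpha C}\int_{B_{\rho/2}(p)}|x-p|^{-\alpha n_{+,p}/(2\pi)}\,dx=+\infty$; integrating in $\alpha$ over $[1-\delta,1]$ against $\calP$ then forces $\iint_{I\times\Om}e^{\alpha v}\,\Pda\,dx=+\infty$, contradicting the first step. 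Hence $\ell=0$, i.e.\ $k\equiv0$ in \eqref{limiteq}. Note that the strict inequality $n_{+,p}>4\pi$ is used exactly here, to guarantee $4\pi/n_{+,p}<1$ and hence that $\delta>0$ can be chosen.
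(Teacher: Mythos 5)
Your proof is correct, but it runs along a genuinely different track from the paper's. The paper proves $k\equiv 0$ \emph{directly}, by showing $\liminf_n Z_n=\liminf_n\iint_{I\times\Om}e^{\alpha v_n}=+\infty$ at the sequence level: it uses the truncated Green's function $G^T$ to get $\liminf_n u_{+,n}\ge n_{+,p}G(\cdot,p)-C$, combines this with $u_{-,n}\le C$ on a small ball around $p$, applies Fatou, and uses the monotonicity \eqref{increase} of $\alpha\mapsto\int_\Om e^{\alpha v}$ to pass from $\alpha\in[1-\delta,1]$ to the single exponent $1-\delta$. No properties of the limit problem beyond the decomposition $\nu_+=s_+\,dx+\sum n_{+,p}\delta_p$ from Theorem~\ref{thm:firstblowup} are needed. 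Your argument instead runs by contradiction at the \emph{limit} level: you assume $Z_n\to Z_0<\infty$, identify the regular density $r=\lambda_0 Z_0^{-1}e^{\alpha v}$ from the locally uniform convergence of $v_n$ away from $\mathcal S$, invoke $r\in L^1(I\times\Om)$ from Theorem~\ref{thm:secondblowup}(i), and then contradict it with a potential-theoretic lower bound $v\ge n_{+,p}G(\cdot,p)-C$ obtained from the limit equation \eqref{limiteq}. Both proofs hinge on the same three facts ($n_{+,p}>4\pi$, $p\notin\mathcal S_-$, mass of $\calP$ near $\pm1$) and on the Green's function singularity, and both effectively show $Z_n\to\infty$. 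The paper's route is shorter and avoids potential theory for the limit $v$: the truncated-Green's-function/Fatou trick replaces your Riesz-type boundedness argument for $\widetilde w=v-n_{+,p}G(\cdot,p)$, which is indeed the one place in your write-up that needs care (though it is fine: $-\Delta\widetilde w=s_+-s_-+n_{+,p}/|\Om|-c_0\in L^1(B_\rho)$ is bounded below on $B_\rho(p)$ since $s_-$ is bounded there, so $\widetilde w+\tfrac{C}{4}|x-p|^2$ is superharmonic and hence locally bounded below). Your route, on the other hand, makes the role of $r\in L^1(I\times\Om)$ transparent and gives as a byproduct the explicit identification $r=\lambda_0 Z_0^{-1}e^{\alpha v}$ under the contradiction hypothesis, but it uses Theorem~\ref{thm:secondblowup}, which the paper's proof manages to avoid.
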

\begin{proof}
We consider the case where $\calP([1-\delta,1] )>0$
for all $0<\delta\ll1$ and $p\in \mathcal S_+\setminus \mathcal S_-.$
The remaining cases are is analogous. For every fixed $T>0$ we truncate the Green's function
\[
G^T(x,\cdot)=\min\{T, G(x,\cdot)\}\in C(\Om).
\]
Then,
\begin{equation*}
\begin{split}
u_{+,n}(x)= \int_\Om G(x,\cdot ) \nu_{+,n}
\geqslant \int_\Om G^T(x,\cdot) \nu_{+,n} &\rightarrow \int_\Om G^T(x,\cdot)\nu_+\\
&= n_{+,p}G^T(x,p)+\int_\Om G^T(x,\cdot)(\nu_{+}-n_{+,p}\delta_p)\\
&\geqslant n_{+,p}G^T(x,p)-C.\\
\end{split}
\end{equation*}
Hence,
\[
\liminf_{n} u_{+,n}(x)\geqslant n_{+,p}G^T(x,p)-C.
\]
Letting $T\rightarrow \infty,$
\[
\liminf_n u_{+,n}\geqslant n_{+,p}G(x,p)-C.
\]
 On the other hand it is well known that in a local chart
on $B_\rho=B_\rho(p)$
\[
G(x,p)\geqslant \frac 1{2\pi}\log\frac{1}{|x|} - C.
\]
Therefore,
 \[
 \exp\{\alpha u_{+,n}(x)\}\geqslant \exp\{\alpha n_{+,p}(\frac 1{2\pi}\log\frac{1}{|x|} - C)\}
\simeq\left(\frac{1}{|x|}\right)^{\alpha n_{+,p}/2\pi}.
 \]
Since $p\not\in\mathcal S_-$, then $u_-(x)\leqslant C$ in $B_\rho(p)$ whenever $\rho $ is suitable small.
We observe that the function $\int_\Om e^{\alpha v}dx$ is increasing with respect to $\alpha >0$.
In fact, differentiation with respect to $\alpha$ yields:
%

\begin{equation}
\label{increase}
 \begin{split}
\frac{d}{d\alpha}\int_\Om e^{\alpha v}dx &= \int_\Om  v e^{\alpha v}dx
\geqslant \int_{v\geqslant 0}  v dx - \int_{v< 0} e^{\alpha v}(-v)dx \\
&> \int_{v\geqslant 0}  v dx - \int_{v< 0} (-v)dx = \int_\Om v=0.
\end{split}
 \end{equation}
Using this fact, we conclude that,
 \begin{equation*}
 \begin{split}
 \liminf_n\iint_{I\times \Om} e^{\alpha v_n} &=\liminf_n\iint_{I\times \Om } e^{\alpha (u_{+,n}-u_{-,n})}\\
 & \geqslant\calP([1-\delta,1]) \liminf_n\int_{\Om }e^{(1-\delta)(u_{+,n}-u_{-,n})} \\
&\geqslant\calP([1-\delta,1]) e^{-C}\liminf_n\int_{B_\rho(p)}e^{ (1-\delta)u_{+,n}}\\
&\geqslant e^C\calP([1-\delta,1])\int_{B_\rho(p)}\left(\frac{1}{|x|}\right)^{(1-\delta)n_{+,p}/2\pi}.\\
 \end{split}
 \end{equation*}
Choosing $\delta$ such that $(1-\delta)n_{+,p}>4\pi$ we conclude the proof.
\end{proof}
We now consider the Trudinger-Moser type inequalities associated to \eqref{SawadaSuzuki} and \eqref{NE}.
We recall that \eqref{SawadaSuzuki} is the Euler-Lagrange equation for the functional
\begin{equation}
\label{SSfunct}
\mathcal J_{\lambda}(v)=\frac{1}{2}\|\nabla v\|_2^2-\lambda\int_I\log\left(\int_\Om e^{\alpha v}\right)\,\Pda,
\end{equation}
defined for $v\in\mathcal E$.
For $\calP=\delta_1$, the functional $\mathcal J_{\lambda}(v)$ is the functional
\begin{equation}
\label{standTM}
\frac{1}{2}\|\nabla v\|_2^2-\lambda\log\left(\int_\Om e^{v}\right),
\end{equation}
whose Euler-Lagrange equation is the standard meanfield equation \eqref{standmfe}.
In view of the classical Trudinger-Moser inequality, as established in \cite{fon}:
\begin{equation}
\label{fontana}
\sup\left\{\int _\Om e^{4\pi v^2}:\ v\in\mathcal E,\ \|\nabla v \|_2\leqslant 1\right\}<+\infty,
\end{equation}
where the constant $4\pi$ is sharp,
the functional~\eqref{standTM} is bounded from below on $\mathcal E$ if and only if
$\lambda\le8\pi$.
In \cite{os3}, as an application of the blow-up analysis, it is shown that if
$\calP=t\delta_1+(1-t)\delta_{-1}$, $t\in[0,1]$, then the optimal value of $\lambda$
which ensures boundedness from below of the functional \eqref{SSfunct}
is improved to $8\pi\min\{t^{-1},(1-t)^{-1}\}$.
By using the blow-up analysis developed in \cite{ORS} and similar arguments one may
check that \eqref{SSfunct} is bounded from below if
\[
\lambda\le\frac{8\pi}{\max\{\int_{I_+}\alpha^2\Pda,\int_{I_-}\alpha^2\Pda\}}.
\]
This value is however is in general not optimal.
The best constant is actually given by
\[
\inf\left\{\frac{8\pi\calP(K_\pm)}{\left(\int_{K_\pm}\alpha\Pda\right)^2}:\ K_\pm\subset I_\pm\cap\mathrm{supp}\calP\right\},
\]
see \cite{RS}.
\par
In view of such ``improved" Trudinger-Moser inequalities, it is natural to
seek analogous results for \eqref{NE}.
More precisely, we note that \eqref{NE} is the Euler-Lagrange equation for the
functional
\begin{equation}
\label{Nerifunct}
\mathcal K_\lambda(v)=\frac{1}{2}\|\nabla v\|_2^2-\lambda\log\left(\iint_{I\times\Om}e^{\alpha v}\,dx\Pda\right).
\end{equation}
However, it is not difficult to check that such an improvement does \textit{not} hold for
$\mathcal K_\lambda$, that is, $\mathcal K_\lambda$ is bounded from below if and only if $\lambda\le8\pi$.
The ``if" part was already observed in \cite{ne}.
Indeed, from
\begin{equation*}
\alpha v \leqslant \frac{\|\nabla v\|_2^2}{16 \pi}+4\pi \alpha^2 \frac{v^2}{\|\nabla v\|_2^2}
\end{equation*}
we derive using \eqref{fontana} that
\begin{equation*}
\log\left(\int_{\Om}  \int_{[-1,1]}e^{\alpha v}\calP(d\alpha) dx\right) \leqslant \frac1{16\pi} \|\nabla v\|_2^2+K,
\end{equation*}
where $K$ is independent of $v\in\mathcal E$. See also \cite{s1book}.
Therefore $\mathcal K_\lambda$ is bounded below
if $\lambda\le8\pi$.
On the other hand, differently from what happens for the functional \eqref{SSfunct},
the value $8\pi$ is also optimal,
provided that $\mathrm{supp}\calP\cap\{-1,1\}\neq\emptyset$.
Indeed, the following holds:
\begin{thm}
\label{thm:TM}
Let $\mathrm{supp}\calP\cap\{-1,1\}\neq\emptyset$.
Then, the functional $\mathcal K_\lambda(v)$is bounded from below on $\mathcal E$ if and only if $\lambda \leqslant8\pi.$
\end{thm}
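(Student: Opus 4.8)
The ``if'' part is already contained in the discussion preceding the statement: from the pointwise inequality $\alpha v\le\frac{1}{16\pi}\|\nabla v\|_2^2+4\pi\alpha^2 v^2/\|\nabla v\|_2^2$, together with $\alpha^2\le1$, $\calP(I)=1$ and Fontana's inequality \eqref{fontana}, one gets $\log\iint_{I\times\Om}e^{\alpha v}\,\Pda dx\le\frac{1}{16\pi}\|\nabla v\|_2^2+K$, whence $\mathcal K_\lambda(v)\ge(\tfrac12-\tfrac{\lambda}{16\pi})\|\nabla v\|_2^2-\lambda K$ is bounded below on $\mathcal E$ as soon as $\lambda\le8\pi$. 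So the only thing left is to show that $\mathcal K_\lambda$ is \emph{un}bounded below on $\mathcal E$ when $\lambda>8\pi$, and the plan is to exhibit a family of test functions concentrating at a point, i.e.\ standard Liouville bubbles, normalized to zero average.

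Assume first $1\in\mathrm{supp}\calP$ (the case $-1\in\mathrm{supp}\calP$ reduces to this one under $v\mapsto-v$, which fixes $\|\nabla v\|_2^2$ and replaces $\alpha$ by $-\alpha$). Fix $p\in\Om$ and an isothermal chart $(\psi,U)$ with $\psi(p)=0$, $g=e^{\xi}(dX_1^2+dX_2^2)$, $\xi(0)=0$, and $B_{2\rho}(0)\subset\psi(U)$. For $\ee\in(0,\rho)$ let $U_\ee(X)=\log\frac{8\ee^2}{(\ee^2+|X|^2)^2}$ for $|X|\le\rho$, extended by the constant $U_\ee|_{|X|=\rho}$ for $|X|\ge\rho$ and pulled back to $\Om$; put $w_\ee=U_\ee-\frac{1}{|\Om|}\int_\Om U_\ee\in\mathcal E$. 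Two standard estimates are needed. First, by the conformal invariance of the Dirichlet integral in dimension two and an explicit computation,
\[
\|\nabla w_\ee\|_2^2=\int_{B_\rho(0)}|\nabla_X U_\ee|^2\,dX=32\pi\log\tfrac1\ee+O(1).
\]
Second, $\frac{1}{|\Om|}\int_\Om U_\ee=2\log\ee+O(1)$, so on a ball of radius $\ee$ about $p$ one has $w_\ee\ge 4\log\frac1\ee-C$; hence for every $\alpha\in[0,1]$, $e^{\alpha w_\ee}\ge c\,\ee^{-4\alpha}$ there, and since that ball has $g$-measure $\ge c\ee^2$,
\[
\int_\Om e^{\alpha w_\ee}\,dx\ \ge\ c\,\ee^{2-4\alpha},\qquad\alpha\in[0,1],
\]
with $c>0$ independent of $\alpha$ and of small $\ee$.

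Now fix $\delta\in(0,\tfrac12)$; since $1\in\mathrm{supp}\calP$ we have $\calP([1-\delta,1])>0$. Restricting the $\alpha$-integration to $[1-\delta,1]$ and using that $\ee^{2-4\alpha}\ge\ee^{-2+4\delta}$ there (because $\ee<1$ and $-2\le2-4\alpha\le-2+4\delta<0$),
\[
\iint_{I\times\Om}e^{\alpha w_\ee}\,\Pda\,dx\ \ge\ c\,\calP([1-\delta,1])\,\ee^{-2+4\delta},
\]
so that $\log\iint_{I\times\Om}e^{\alpha w_\ee}\,\Pda dx\ge(2-4\delta)\log\frac1\ee-C$. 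Combining with the energy estimate,
\[
\mathcal K_\lambda(w_\ee)\ \le\ \big[16\pi-\lambda(2-4\delta)\big]\log\tfrac1\ee+O(1).
\]
Given $\lambda>8\pi$, we first choose $\delta>0$ small enough that $\lambda(2-4\delta)>16\pi$ (possible since $\lambda(2-4\delta)\to2\lambda>16\pi$ as $\delta\to0$), and then let $\ee\to0$: this forces $\mathcal K_\lambda(w_\ee)\to-\infty$, so $\mathcal K_\lambda$ is unbounded below on $\mathcal E$, completing the ``only if'' part.

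The routine work is the two displayed bubble estimates, which are classical (the energy via conformal invariance of the $2$D Dirichlet integral, the mean and the pointwise bound by direct calculation). The genuinely delicate point is the interplay between the concentration scale $\ee$ and the mass distribution of $\calP$ near $\alpha=1$: one cannot take $\delta=0$, so the bubble only ``sees'' the exponent $2-4(1-\delta)$ rather than $-2$; the fix is exactly to fix $\delta$ first (small — here is where $\lambda>8\pi$ \emph{strictly} is used, so that $\lambda(2-4\delta)>16\pi$ still holds) and only afterwards send $\ee\to0$. It is precisely this step that uses the hypothesis $\mathrm{supp}\calP\cap\{-1,1\}\neq\emptyset$, and that also explains why — unlike the Sawada--Suzuki functional \eqref{SSfunct} — no improvement of the constant $8\pi$ occurs: the single bubble already saturates the Moser--Trudinger bound used in the ``if'' part, and a multi-bump competitor would only increase the Dirichlet energy faster than the logarithmic term, yielding a worse threshold.
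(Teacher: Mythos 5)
Your proof is correct, but it follows a genuinely different route from the paper's. The paper does not construct test functions at all in the proof of Theorem~\ref{thm:TM}: it splits the $\alpha$-integral at $1-\delta$, uses the monotonicity $\frac{d}{d\alpha}\int_\Om e^{\alpha v}\,dx\ge 0$ proved in \eqref{increase} to bound $\int_{[1-\delta,1]}\int_\Om e^{\alpha v}\,\Pda\,dx\ge\calP([1-\delta,1])\int_\Om e^{(1-\delta)v}\,dx$, drops the nonnegative remaining $\log$ term, and then rescales via $w=(1-\delta)v$ to reduce $\mathcal K_\lambda$ (up to an additive constant) to $\frac{1}{(1-\delta)^2}$ times the classical mean field functional \eqref{standTM} with parameter $\lambda(1-\delta)^2$; the unboundedness for $\lambda(1-\delta)^2>8\pi$ is then quoted from Fontana's sharp result. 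You instead reprove the underlying sharpness directly by a one-bubble construction, estimating $\|\nabla w_\ee\|_2^2=32\pi\log\frac1\ee+O(1)$ and $\iint e^{\alpha w_\ee}\gtrsim\calP([1-\delta,1])\,\ee^{-2+4\delta}$ and comparing. Your computations are all correct: the Dirichlet energy is handled by conformal invariance, the mean is $2\log\ee+O(1)$, and the ``fix $\delta$ first, then $\ee\to0$'' order of limits is exactly the right way to exploit $1\in\mathrm{supp}\calP$ without demanding that $\calP$ charge the endpoint. What each approach buys: the paper's is shorter, self-documenting (it displays the comparison functional \eqref{standTM} explicitly, which makes the ``no improvement over $8\pi$'' phenomenon transparent as a consequence of the scaling $v\mapsto(1-\delta)v$), and avoids redoing bubble calculus; yours is fully self-contained, does not require the monotonicity lemma \eqref{increase}, and makes the concentration mechanism visible — your closing remark that a single bubble already saturates the bound, whereas for \eqref{SSfunct} the normalization by $\int_\Om e^{\alpha v}\,dx$ for each $\alpha$ separately allows an improved constant, is a useful heuristic that the paper leaves implicit.
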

\begin{proof}
We need only prove that
\begin{equation}\label{opt}
\inf_{v\in \mathcal E}\mathcal K_\lambda(v)=-\infty, \qquad \quad\forall \lambda >8\pi.
\end{equation}
Using \eqref{increase}, for any $\delta>0$ we have:
\begin{eqnarray*}
\mathcal K_\lambda (v) &&= \frac 12 \|v\|^2-\lambda \log\left(\iint_{I\times\Om} e^{\alpha v}\calP (d\alpha )dx\right)\\
&&= \frac 12 \|v\|^2-\lambda \log\left( \int_{1-\delta}^1\int_\Om  e^{\alpha v}\calP(d\alpha)  dx
+\int^{1-\delta}_{-1}\int_\Om  e^{\alpha v}\calP(d\alpha)  dx\right)\\
&&= \frac 12 \|v\|^2-\lambda \log\left( \int_{1-\delta}^1\int_\Om  e^{\alpha v}\calP(d\alpha)  dx \right)   \\
&&-\lambda \log\left( 1 + \frac {\int_{-1}^{1-\delta}\int_{\Om}e^{\alpha v}\calP (d\alpha )dx}{ \int_{1-\delta}^1\int_\Om  e^{\alpha v}\calP(d\alpha)  dx} \right)\\
&&\leqslant \frac 12 \| v\|^2-\lambda \log\left(\int_\Om  e^{(1-\delta) v} dx\right)  -\lambda \log (\calP([1-\delta,1]))\\
&&-\lambda \log  \left( 1 + \frac {\int_{-1}^{1-\delta}\int_{\Om}e^{\alpha v}\calP (d\alpha )dx}{ \int_{1-\delta}^1\int_\Om  e^{\alpha v}\calP(d\alpha)  dx} \right)\\
&&\leqslant \frac{1}{(1-\delta)^2} \left[\frac 12 \|(1-\delta) v\|^2-\lambda (1-\delta)^2 \log\left(\int_\Om  e^{(1-\delta) v} dx\right) \right] -\lambda \log (\calP([1-\delta,1])). \\
\end{eqnarray*}

Hence, for $\lambda (1-\delta)^2>8\pi$, the right hand side of last inequality is unbounded from below (see \cite{fon}) and so
\[
\inf_{v\in\mathcal E}\mathcal K_\lambda (v) =-\infty  \qquad \mbox{ for any }\lambda >\frac{8\pi}{(1-\delta)^2}.
\]

Since $\delta>0 $ is arbitrary, \eqref{opt} follows.
\end{proof}

\section{Acknowledgements}
We thank Professor Takashi Suzuki for suggesting the problem, for sharing many interesting discussions
with us and for constantly supporting our research.
This work is partially supported by the Marie Curie International Researchers Exchange Scheme
FP7-MC-IRSES-2009-247486 of the Seventh Framework Programme.

\end{document}